\numberwithin{equation}{section}
\newcommand{\pmat}[1]{\begin{pmatrix} #1 \end{pmatrix}}
\newcommand{\beq} {\begin{equation}}
\newcommand{\eeq} {\end{equation}}
\newcommand{\bdm} {\begin{displaymath}}
\newcommand{\edm} {\end{displaymath}}
\newcommand{\bit}{\begin{itemize}}
\newcommand{\eit}{\end{itemize}}
\newcommand{\bde}{\begin{description}}
\newcommand{\ede}{\end{description}}
\newcommand{\ben}{\begin{enumerate}}
\newcommand{\een}{\end{enumerate}}
\newcommand{\algn}[1]{\begin{align} #1 \end{align}}
\newcommand{\algns}[1]{\begin{align*} #1 \end{align*}}
\newcommand{\gats}[1]{\begin{gather*} #1 \end{gather*}}
\newcommand{\barr}{\begin{array}}
\newcommand{\earr}{\end{array}}
\newcommand{\mc}[1]{\mathcal{#1}}
\newcommand{\LRp}[1]{\left( #1 \right)}
\newcommand{\LRa}[1]{\left< #1 \right>}
\newcommand{\jump}[1] {\ensuremath{\left[\![{#1}]\!\right]}}
\newcommand{\ra}{\rightarrow}
\newcommand{\e}{\epsilon}
\newcommand{\R}{{\mathbb R}}
\newcommand{\grad}{\operatorname{grad}}
\renewcommand{\div}{\operatorname{div}}
\newcommand{\tr}{\operatorname{tr}}
\newcommand{\spn}{\operatorname{span}}
\newcommand{\bs}{\boldsymbol}
\newcommand{\lap}{\Delta}
\newcommand{\pd}{\partial}
\newtheorem{theorem}{Theorem}[section]
\newtheorem{prop}[theorem]{Proposition} 
\newtheorem{lemma}[theorem]{Lemma}
\newtheorem{cor}[theorem]{Corollary}
\newcommand{\mfh}{\mathfrak{H}}
\newcommand{\Lm}{\Lambda}
\newcommand{\Alt}{\operatorname{Alt}}
\newcommand{\vol}{\operatorname{vol}}
\newcommand{\dd} {{\rm d}}
\newcommand{\cdeg}{\operatorname{cdeg}}
\newcommand{\ncdeg}{\operatorname{ncdeg}}
\newcommand{\s}{\sigma}
\newcommand{\Q}{\mc{Q}}
\begin{document}

\title[Local coderivatives]{High order approximation of Hodge Laplace problems with local coderivatives on cubical meshes}

\author{Jeonghun J. Lee} 
\address{Department of Mathematics, Baylor University, Waco, TX , USA}
\email{jeonghun\_lee@baylor.edu}
\urladdr{}
\subjclass[2000]{Primary: 65N30}
\keywords{perturbed mixed methods, local constitutive laws}
\date{July, 2019 }
\maketitle

\begin{abstract}
In mixed finite element approximations of Hodge Laplace problems associated with the de Rham complex, the exterior derivative operators are computed exactly, so the spatial locality is preserved. 
However, the numerical approximations of the associated coderivatives are nonlocal and
it can be regarded as an undesired effect of standard mixed methods. 
For numerical methods with local coderivatives a perturbation of low order mixed methods in the sense of variational crimes has been developed for simplicial and cubical meshes.
In this paper we extend the low order method to all high orders on cubical meshes using a new family of finite element differential forms on cubical meshes.
The key theoretical contribution is a generalization of the linear degree, in the construction of the serendipity family of differential forms, and the generalization is essential in the unisolvency proof of the new family of finite element differential forms. 
\end{abstract}

\section{Introduction}
In this paper we consider finite element methods for the Hodge Laplace problems of the de Rham complex where both the approximation 
of the exterior derivative and the associated coderivative are spatially local operators. 
The locality of the coderivative operator is not fulfilled in standard mixed methods for these problems (cf. \cite{{AFW06, AFW10}}). 
In fact, pursuing numerical methods with local coderivative is related to the development of various numerical methods for the Darcy flow problems.

To discuss this local coderivative property in a more familiar context,
let us consider the mixed form of Darcy flow problems: Find $(\sigma, u) \in H(\div,\Omega) \times L^2(\Omega)$ such that 
\algn{
\label{mixed-darcy}
\begin{split}
\LRa{\bs{K}^{-1}\sigma, \tau} - \LRa{u, \div \tau} &= 0, \qquad \quad \forall \tau \in H(\div , \Omega),\\
\LRa{\div \sigma,v}  &= \LRa{f,v}, \quad \forall v \in L^2(\Omega),
\end{split}
}
where the unknown functions $\sigma$ and $u$ are vector and scalar fields defined on a bounded domain  $\Omega$ in  $\R^n$, and $\partial \Omega$ is its boundary.
The coefficient $\bs{K}$ is symmetric, matrix-valued, spatially varying, and uniformly positive definite. 
Note that $u$ is the scalar field of the pressure and $\sigma$ is the fluid velocity given by the Darcy law $\sigma = - \bs{K} \grad u$.  
The standard mixed finite element method for this problem is:

Find $(\sigma_h,u_h) \in \Sigma_h \times V_h$ such that 
\algn{
\label{mixed-Lap}
\begin{split}
\LRa{\bs{K}^{-1}\sigma_h, \tau} - \LRa{u_h, \div \tau} &= 0, \qquad \quad \forall \tau \in \Sigma_h,\\
\LRa{\div \sigma_h,v}  &= \LRa{f,v}, \quad \forall v \in V_h,
\end{split}
}
where $\Sigma_h \subset H(\div, \Omega)$ and $V_h \subset L^2(\Omega)$ are finite element spaces, and $\sigma_h$ is an approximation of
$\sigma = - \bs{K}\grad u$. Here the notation $\LRa{\cdot, \cdot}$ is used to denote the $L^2$ inner product for both scalar fields 
and vector fields defined on $\Omega$.

The mixed method \eqref{mixed-Lap} has a local mass conservation property, 
and its stability conditions and error estimates are well-studied (cf. \cite{BFBook}). 
However, the mixed method \eqref{mixed-Lap} does not preserve the local property of the map $u \mapsto \sigma = - \bs{K} \grad u$ in the continuous problem. In other words, the map $u_h \mapsto \sigma_h$, defined by the first equation of \eqref{mixed-Lap}, is not local 
because 
the inverse of the so--called ``mass matrix" derived from the $L^2$ inner product 
$\LRa{\bs{K}^{-1}\tau, \tau'}$ on $\Sigma_h$ is nonlocal.
Since constitutive laws are spatially local relations of quantities in many physical models,
construction of local numerical constitutive laws is one of key issues in the development of numerical methods following physical derivation of constitutive laws such as the finite volume methods and the multi-point flux approximations.



Here we give a brief overview on previous studies of numerical methods with local coderivatives mainly for the Darcy flow problems but we have to admit that this overview and the list of literature here are by no means complete. 

An early work for the locality property by perturbing the mixed method \eqref{mixed-Lap} 
was done in \cite{Baranger-Maitre-Oudin-1996} on triangular meshes with 
the lowest order Raviart-Thomas space. 
The approach leads to a two-point flux method, which approximate flux across 
the interface of two cells by two point values of pressure field in the two cells, 
but 
the two-point flux method is not consistent in general for anisotropic $\bs{K}$,  cf. \cite{Aavatsmark-2002,Aavatsmark2007}.
To circumvent this defect, various  multi-point flux approximation schemes were derived (cf. \cite{Aavatsmark-2002}) but the stability and error estimates of these schemes are usually  nontrivial and are restricted to low order cases. 
It seems that the most useful approach for the stability and error estimates for these numerical schemes is 
to utilize connections between the schemes and perturbed mixed finite element methods, cf. \cite{Bause-Hoffman-Knabner-2010,Droniou-Eymard-2006,Klausen-Winther-2006b,Klausen-Winther-2006a,Vohralik-Wohlmuth-2013,Wheeler-Yotov-2006}.
An alternative approach to perturbed mixed finite element methods for the local coderivative property was proposed in \cite{Brezzi-Fortin-Maridi-2006,Wheeler-Yotov-2006} independently
for simplicial and quadrilateral meshes. The key to achieve local coderivatives in this approach is a mass-lumping for vector-valued finite elements. 
Further extensions to hexahedral grids are studied in  \cite{Ingram-Wheeler-Yotov-2010,Wheeler-Xue-Yotov-2012}. Extensions to all high order methods are studied in \cite{Lee-Yotov-etal} with the development of a new family of $H(\div)$ finite elements on quadrilateral and hexahedral meshes, which is inspired by the new family of low order finite elements in \cite{Lee-Winther}. For the Maxwell equations, Cohen and Monk studied perturbed mixed methods based on anisotropic mass-lumping of vector-valued finite elements but the methods may not be consistent when the material coefficients are not isotropic (cf.  \cite{MonkCohen1998}). 
For the Hodge Laplace problems the discrete exterior calculus, proposed in \cite{DEC05,Hirani-thesis}, cf. also \cite{Hiptmair-2001}, has a natural local coderivative property by construction. 
However, a satisfactory convergence theory seems to be limited (see \cite{Tsogtgerel-Schulz-2018} for 0-form case).

The purpose of this paper is to extend the results in 
 \cite{Lee-Yotov-etal,Lee-Winther} to the Hodge Laplace problems on cubical meshes. 
More precisely, we will construct high order perturbed mixed methods for the Hodge Laplace problems on cubical meshes which have the local numerical coderivatives. 
Since the Hodge Laplace problems are models of other specific problems, the numerical method in the present paper can be used to develop numerical methods with the locality property for other problems. For example, the methods with the newly developed finite elements have potential applications to high order mass-lumping for the time-dependent Maxwell equations.
However, we will restrict our discussion only to steady problems in the paper because developing the methods for steady problems is already quite involved. 

The paper is organized as follows. In the next section we will present a brief review
of exterior calculus, the de Rham complex with its discretizations, and the abstract analysis results in \cite{Lee-Winther} for the analysis framework to be used in later sections.  In Section 3 we develop a new family of finite element differential forms on cubical meshes, say $\tilde{\mc{Q}}_r \Lm^k$, by defining the shape functions and the degrees of freedom, and proving the unisolvency. We also prove some properties of the new elements for construction of numerical methods with the local coderivative property. 
In Section~4, we construct numerical methods with local coderivatives using $\tilde{\mc{Q}}_r \Lm^k$ and the framework in Section~2. 
Finally, we summarize our results with some concluding remarks in Section 5.

\section{Preliminaries}\label{prelim}
Here we review the language of the finite element exterior calculus \cite{{AFW06, AFW10}} and also introduce new concepts of polynomial differential forms.  We assume that $\Omega \subset \R^n$ is a bounded
domain with a polyhedral boundary. 
We will consider finite element approximations of a differential equation which has a differential form defined on $\Omega$ as an unknown.
Let $\Alt^k(\R^n)$ be the space of alternating $k$--linear maps on $\R^n$. 
For $1 \leq k \leq n$ let $\Sigma_k$ be the set of increasing injective maps from $\{1, ..., k\}$ to $\{1, ..., n \}$. Then 
we can define an inner product on $\Alt^k(\R^n)$ by 
\[
\LRa{ a,b}_{\Alt} = \sum_{\sigma \in \Sigma_k}
a(e_{\sigma_1},\ldots,e_{\sigma_k})b(e_{\sigma_1},\ldots,e_{\sigma_k}),
\quad a,b \in \Alt^k(\R^n),
\]
where $\sigma_i$ denotes $\sigma(i)$ for $1 \leq i \leq k$ and $\{e_1,\ldots,e_n\}$ is any orthonormal basis of $\R^n$. 
The differential $k$-forms on $\Omega$ are maps defined on $\Omega$ with values in 
$\Alt^k (\R^n)$.
If $u$ is a differential $k$-form and $t_1,  \ldots ,t_k$ are vectors in $\R^n$, then $u_x(t_1, \ldots, t_k)$ denotes the value of 
$u$ applied to the vectors $t_1,  \ldots ,t_k$ at the point $x \in \Omega$. 
The differential form $u$ is an element of the  space $L^2 \Lm^k(\Omega)$ if and only if 
the map
\[
x \mapsto u_x(t_1, \ldots, t_k)
\]
is in $L^2(\Omega)$ for all 
tuples $t_1,  \ldots ,t_k$.
In fact, $L^2 \Lm^k(\Omega)$ is a Hilbert space with inner product given by
\[
\LRa{u,v} = \int_{\Omega} \LRa{u_x,v_x}_{\Alt} \, dx.
\]
The exterior derivative of a $k$-form $u$ is a $(k+1)$-form $\dd u$ given by
\[
\dd u_x(t_1,   \ldots t_{k+1}) = \sum_{j=1}^{k+1} (-1)^{j+1} \partial_{t_j} u_x(t_1, 
\ldots, \hat t_j, \ldots ,t_{k+1}),
 \]
 where $\hat t_j$ implies that $t_j$ is not included, and $\partial_{t_j}$ denotes the directional derivative.
The Hilbert space $H\Lambda^k(\Omega)$ is the corresponding  space of 
$k$-forms $u$ on $\Omega$, which is in $L^2 \Lm^k(\Omega)$, and where
its exterior derivative, $\dd u = \dd^ku$, is also in $L^2 \Lm^{k+1}(\Omega)$.  The $L^2$ version of the de Rham complex then takes
the form
 \algn{ \label{eq:de-Rham}
H\Lambda^0(\Omega)
\xrightarrow{\dd^0} H\Lambda^1(\Omega) \xrightarrow{\dd^1}
\cdots \xrightarrow{\dd^{n-1}}
H\Lambda^n(\Omega).
}
In the setting of $k$--forms, the  Hodge Laplace problem takes the form 
\begin{equation}\label{hodge-Lap-strong}
Lu = (\dd^* \dd + \dd \dd^*)u = f,
\end{equation}
where $\dd =\dd^k$ is the exterior derivative mapping $k$--forms to $(k+1)$--forms, and the coderivative  $\dd^*= \dd_k^*$ can be seen as the formal
adjoint of $\dd^{k-1}$. Hence, the Hodge Laplace operator $L$ above is more precisely expressed as $L = \dd_{k+1}^* \dd^k + \dd^{k-1} \dd_k^*$.
A typical model problem studied in \cite{AFW06, AFW10} is of the form \eqref{hodge-Lap-strong} and with appropriate boundary conditions.
The mixed finite element methods are derived from a weak formulation, where $\sigma = \dd^* u$ is introduced as an additional variable. It is of the form:

Find $(\sigma, u) \in H\Lambda^{k-1}(\Omega) \times H\Lambda^{k}(\Omega)$ such that 
\begin{align}\label{hodge-mixed}
\begin{split}
\LRa{\sigma, \tau} - \LRa{u, \dd^{k-1}\tau} &= 0, \qquad \quad \tau \in H\Lambda^{k-1}(\Omega), \\
\LRa{\dd^{k-1}\sigma,v} + \LRa{\dd^k u,\dd^k v} &= \LRa{f,v}, \quad v \in H\Lambda^{k}(\Omega).
\end{split}
\end{align}
Here  $\LRa{\cdot,\cdot}$ denotes the inner products of all the spaces of the form  $L^2 \Lm^j(\Omega)$ which appears in the formulation,
i.e., $j = k-1,k,k+1$.
We refer to Sections 2 and 7 of \cite{AFW06} for more details.
We note that  only the exterior derivate $\dd$ is used explicitly in the weak  formulation above, while the relation $\sigma = \dd_k^*u$ 
is formulated weakly in the first equation. 
The formulation also contains the proper natural boundary conditions.
The problem \eqref{hodge-mixed} with $k=n-1$ corresponds to a weak formulation 
of the elliptic equation \eqref {hodge-Lap-strong} in the case when the coefficient $\bs{K}$ is the identity matrix.
The weak formulations \eqref{hodge-mixed} can be modified for variable by changing the $L^2$ inner products to the inner product with the variable coefficient, see \cite[Section 7.3]{AFW06}.
Throughout the discussion  below we will restrict  the discussion 
to the constant coefficient case but the extension of the discussion to problems with variable coefficients which are piecewise constants with respect to 
the mesh, is straightforward. We refer to \cite[Section~6]{Lee-Winther} for details.

If the domain $\Omega$ is not homologically trivial, then 
there may exist nontrivial harmonic forms, i.e., nontrivial elements of the space 
\[
\mfh^k(\Omega) = \{ v \in H\Lm^k (\Omega) \;:\; \dd v = 0 \text{ and }\LRa{v, \dd \tau} = 0 \text{ for all }\tau \in H\Lm^{k-1} (\Omega) \} ,
\]
and the solutions of the system \eqref{hodge-mixed} may not be unique. 
To obtain a system with a unique solution, an extra condition requiring orthogonality with respect to the harmonic forms:

Find $(\sigma, u, p) \in H \Lm^{k-1} (\Omega) \times H \Lm^k(\Omega) \times \mfh^k(\Omega)$ such that
\begin{align}
\notag \LRa{{\sigma}, \tau} - \LRa{\dd \tau, {u}}  &= 0 , & &   \tau \in H \Lm^{k-1} (\Omega), \\
\label{eq:hodge-mixed-cont}\LRa{\dd {\sigma}, v} + \LRa{\dd {u}, \dd v} + \LRa{{p}, v} &= \LRa{f, v}, & &   v \in H \Lm^k (\Omega), \\
\notag \LRa{{u}, q} &= 0, & &   q \in \mfh^k (\Omega) .
\end{align}

The basic construction in finite element exterior calculus is
 a corresponding subcomplex of \eqref{eq:de-Rham},
\[
V_h^0\xrightarrow{\dd} V_h^1 \xrightarrow{\dd}
\cdots \xrightarrow{\dd}
V_h^n,
\]
where the spaces $V_h^k$ are finite dimensional subspaces of
$H\Lambda^k(\Omega)$. In particular, the discrete spaces should have the property that 
$\dd (V_h^{k-1}) \subset V_h^k$.
The finite element methods studied in \cite{AFW06, AFW10} are based on the weak formulation \eqref{hodge-mixed}.
These methods are obtained by simply replacing the Sobolev spaces $H\Lambda^{k-1}(\Omega)$ and $H\Lambda^{k}(\Omega)$ 
by the finite element spaces $V_h^{k-1}$ and $V_h^k$.
More precisely, we are searching for a triple $(\tilde{\sigma}_h,\tilde{u}_h, \tilde{p}_h) \in V_h^{k-1} \times V_h^k \times \mfh_h^k$ such that 
\begin{align}
\notag \LRa{\tilde{\sigma}_h, \tau} - \LRa{\dd \tau, \tilde{u}_h}  &= 0 , & &   \tau \in V_h^{k-1}, \\
\label{eq:hodge-mixed-h}\LRa{\dd \tilde{\sigma}_h, v} + \LRa{\dd \tilde{u}_h, \dd v} + \LRa{\tilde{p}_h, v} &= \LRa{f, v}, & &   v \in V_h^k, \\
\notag \LRa{\tilde{u}_h, q} &= 0, & &   q \in \mfh_h^k,
\end{align}
where the space $\mfh_h^k$, approximating the harmonic forms, is given by 
\[
\mfh_h^k = \{ v \in V_h^k \;:\; \dd v = 0 \text{ and }\LRa{v, \dd \tau} = 0 \text{ for all }\tau \in V_h^{k-1} \} .
\]
Stability and error estimates for the numerical solution of \eqref{eq:hodge-mixed-h} are discussed in \cite[Theorem~3.9]{AFW10} with an error estimate of the form 
\algn{ \label{eq:tilde-approx}
\| (\sigma, u, p) - (\tilde{\sigma}_h, \tilde{u}_h, \tilde{p}_h) \|_{\mc{X}} \quad \lesssim \inf_{(\tau,v,q) \in \mc{X}_h^k} \| (\sigma, u, p) - (\tau, v, q) \|_{\mc{X}} + \mc{E}_h(u) 
}
with 
\algns{
\| (\sigma, u, p) \|_{\mc{X}} := \LRp{ \| \sigma \|^2 + \| \dd \sigma \|^2 + \| u \|^2 + \| \dd u \|^2 + \| p \|^2}^{\frac 12} 
}
and $\mc{E}_h(u)$ comes from the nonconformity of the space of discrete harmonic forms, $\mfh_h^k$, to the space of continuous harmonic forms $\mfh^k$.
$\mc{E}_h(u)$ vanishes if there is no nontrivial harmonic forms, and will usually be of higher order than the 
other terms on the right-hand side of \eqref{eq:tilde-approx}. We refer to \cite[Section 7.6]{AFW06} and \cite[Section 3.4]{AFW10} for more details.

In \cite{Lee-Winther}, abstract conditions are proposed to develop numerical methods satisfying such properties and the lowest order methods were developed in simplicial and cubical meshes. In particular, the cubical mesh case required construction of new finite element differential forms which were called $\mc{S}_1^+ \Lm^k$ spaces. In \cite{Lee-Yotov-etal}, the $\mc{S}_1^+ \Lm^k$ family is extended to all higher orders for the Darcy flow problems in the two and three dimensions on cubical meshes, and as a consequence, higher order numerical methods satisfying the local coderivative property in the two and three dimensional Darcy flow problems on weakly distorted quadrilateral and hexahedral meshes are constructed. The goal of this paper is developing high order numerical methods for the Hodge Laplace equations satisfying the local coderivative property on cubical meshes. For the stability and error analysis we use the abstract framework in \cite{Lee-Winther}. The main contributions of this paper are construction of a new family of finite element differential forms, a higher order extensions of $\mc{S}_1^+ \Lm^k$, and the new techniques for the development of the new elements. Although the new family is a higher order extension of $\mc{S}_1^+ \Lm^k$, we will call it $\tilde{\mc{Q}}_r \Lm^k$ family because it is closely related to the $\mc{Q}_r \Lm^k$ space rather than the $\mc{S}_r\Lm^k$ family.
One characteristic feature of $\tilde{\mc{Q}}_r\Lm^k$ family is that the number of degrees of freedom is same as the one of $\mc{Q}_r\Lm^k$. 

Here we summarize the abstract conditions for stability and error estimates established in \cite{Lee-Winther}.
\begin{itemize}
 \item[\bf (A)] There is a symmetric bounded coercive bilinear form $\LRa{\cdot, \cdot}_h$ on $V_h^{k-1} \times V_h^{k-1}$ such that the norm $\| \tau \|_h:= \LRa{\tau , \tau}_h^{1/2}$ is equivalent to $\| \tau \|$ for $\tau \in V_h^{k-1}$ with constants independent of $h$. 
\end{itemize}

\begin{itemize}
\item[{\bf (B)}] There exist discrete subspaces $W_h^{k-1} \subset L^2 \Lm^{k-1}(\Omega)$ and $\tilde{V}_h^{k-1} \subset V_h^{k-1}$ such that 
\algn{ \label{eq:B-cond}
\LRa{\tau, \tau_0} = \LRa{ \tau, \tau_0}_h, \qquad \tau \in \tilde{V}_h^{k-1}, \tau_0 \in W_h^{k-1},
}
and a linear map $\Pi_h : V_h^{k-1} \ra \tilde{V}_h^{k-1}$ such that $ \dd \Pi_h \tau = \dd \tau $, $\| \Pi_h \tau \| \lesssim \| \tau \|$, and 
\algn{ \label{eq:B-cond2}
  \LRa{ \Pi_h \tau, \tau_0 }_h =        \LRa{ \tau , \tau_0}_h, \quad  \tau_0 \in W_h^{k-1}. 
}
\end{itemize}

If these assumptions are satisfied, then we find a solution $(\sigma_h, u_h, p_h)$ of the problem  
\begin{align} 
\notag \LRa{\sigma_h, \tau}_h - \LRa{\dd \tau, u_h}  &= 0 , & &   \tau \in V_h^{k-1}, \\
\label{eq:Bh-eq} \LRa{\dd\sigma_h, v} + \LRa{\dd u_h, \dd v} + \LRa{p_h, v} &= \LRa{f, v}, & &   v \in V_h^k, \\
\notag \LRa{u_h, q} &= 0, & &   q \in \mfh_h^k .
\end{align}
The following result is proved in \cite{Lee-Winther}. 

\begin{theorem} \label{thm:main} Suppose that the assumptions {\rm \bf (A)} and {\bf(B)} hold. 
Then the solution of \eqref{eq:Bh-eq},  $(\sigma_h, u_h, p_h)$, satisfies
\algn{ \label{eq:err-estm}
\| (\sigma_h - \tilde{\sigma}_h, u_h - \tilde{u}_h, p_h - \tilde{p}_h) \|_{\mc{X}} \lesssim \| \sigma - P_{W_h} \sigma \| + \| \sigma - \tilde{\sigma}_h \|,
}
where $P_{W_h}$ is the $L^2$-orthogonal projection into $W_h^{k-1}$.
\end{theorem}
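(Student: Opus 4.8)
The plan is to treat this as a standard variational-crime estimate: the perturbed system \eqref{eq:Bh-eq} differs from the conforming system \eqref{eq:hodge-mixed-h} only through the replacement of $\LRa{\cdot,\cdot}$ by $\LRa{\cdot,\cdot}_h$ in the first equation, so subtracting the two systems should produce an error problem driven by a single consistency functional. Writing $e_\sigma = \sigma_h - \tilde\sigma_h$, $e_u = u_h - \tilde u_h$, $e_p = p_h - \tilde p_h$, I would subtract \eqref{eq:hodge-mixed-h} from \eqref{eq:Bh-eq}. The second and third equations use $\LRa{\cdot,\cdot}$ in both systems and $\mfh_h^k$ is defined with the unperturbed inner product, so they yield the homogeneous relations $\LRa{\dd e_\sigma, v} + \LRa{\dd e_u, \dd v} + \LRa{e_p, v} = 0$ and $\LRa{e_u, q} = 0$. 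In the first equation, rewriting $\LRa{\sigma_h,\tau}_h - \LRa{\tilde\sigma_h,\tau}$ as $\LRa{e_\sigma,\tau}_h + (\LRa{\tilde\sigma_h,\tau}_h - \LRa{\tilde\sigma_h,\tau})$ gives $\LRa{e_\sigma, \tau}_h - \LRa{\dd\tau, e_u} = F(\tau)$ with the consistency functional
\[
F(\tau) := \LRa{\tilde\sigma_h, \tau} - \LRa{\tilde\sigma_h, \tau}_h, \qquad \tau \in V_h^{k-1}.
\]

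Thus $(e_\sigma, e_u, e_p)$ solves the perturbed mixed Hodge Laplace system with data only in the first equation. Since assumption {\bf (A)} makes $\LRa{\cdot,\cdot}_h$ an inner product equivalent to $L^2$, this discrete system is well posed in the $\mc{X}$-norm (this is exactly the stability underlying the solvability of \eqref{eq:Bh-eq}), so the a priori bound reads
\[
\| (e_\sigma, e_u, e_p) \|_{\mc{X}} \lesssim \sup_{0 \neq \tau \in V_h^{k-1}} \frac{|F(\tau)|}{\|\tau\|}.
\]
Everything then reduces to estimating $F(\tau)$, and this is where I expect the real work — and the use of assumption {\bf (B)} — to lie.

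To estimate $F(\tau)$ I would insert $w := P_{W_h}\sigma \in W_h^{k-1}$ and split $F(\tau) = \big(\LRa{\tilde\sigma_h - w, \tau} - \LRa{\tilde\sigma_h - w, \tau}_h\big) + \big(\LRa{w, \tau} - \LRa{w, \tau}_h\big)$. The first bracket is controlled directly by boundedness of both inner products (via {\bf (A)}) by $\|\tilde\sigma_h - w\|\,\|\tau\| \le (\|\sigma - \tilde\sigma_h\| + \|\sigma - P_{W_h}\sigma\|)\,\|\tau\|$. For the second bracket I would bring in the projection $\Pi_h$ of {\bf (B)}: writing $\tau = \Pi_h\tau + (\tau - \Pi_h\tau)$, the pairing of $w$ with $\Pi_h\tau \in \tilde V_h^{k-1}$ coincides in the two inner products by \eqref{eq:B-cond}, while $\LRa{w, \tau - \Pi_h\tau}_h = 0$ by \eqref{eq:B-cond2}; the second bracket therefore collapses to $\LRa{w, \tau - \Pi_h\tau}$.

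Finally, because $\dd\Pi_h\tau = \dd\tau$, the form $\tau - \Pi_h\tau$ is closed, so the first equation of the continuous problem \eqref{eq:hodge-mixed-cont} gives $\LRa{\sigma, \tau - \Pi_h\tau} = \LRa{\dd(\tau-\Pi_h\tau), u} = 0$; subtracting this leaves $\LRa{w - \sigma, \tau - \Pi_h\tau}$, bounded by $\|\sigma - P_{W_h}\sigma\|\,\|\tau - \Pi_h\tau\| \lesssim \|\sigma - P_{W_h}\sigma\|\,\|\tau\|$ using $\|\Pi_h\tau\|\lesssim\|\tau\|$. Combining the brackets gives $|F(\tau)| \lesssim (\|\sigma - P_{W_h}\sigma\| + \|\sigma - \tilde\sigma_h\|)\,\|\tau\|$, which inserted into the a priori bound yields \eqref{eq:err-estm}. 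The crux is this last manipulation: recognizing that $P_{W_h}\sigma$ is the right quantity to insert so that {\bf (B)} eliminates the inner-product mismatch on $\tilde V_h^{k-1}$, and that the commuting property $\dd\Pi_h\tau=\dd\tau$ converts the leftover term into an approximation error of the continuous flux $\sigma$.
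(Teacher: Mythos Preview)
The paper does not supply its own proof of this theorem; it simply records the statement and cites \cite{Lee-Winther} for the argument. Your reconstruction is the expected Strang-type variational-crime estimate and is essentially correct: subtracting \eqref{eq:hodge-mixed-h} from \eqref{eq:Bh-eq} yields an error system driven solely by the consistency functional $F(\tau)=\LRa{\tilde\sigma_h,\tau}-\LRa{\tilde\sigma_h,\tau}_h$, stability under {\bf (A)} reduces matters to bounding $F$, and your handling of $F$ via the insertion of $w=P_{W_h}\sigma$, the identities \eqref{eq:B-cond}--\eqref{eq:B-cond2}, and the relation $\dd\Pi_h\tau=\dd\tau$ combined with the continuous first equation is exactly the mechanism that produces the two approximation terms on the right of \eqref{eq:err-estm}.

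One small technical point: in bounding the first bracket by $\|\tilde\sigma_h-w\|\,\|\tau\|$ you implicitly use $|\LRa{\tilde\sigma_h-w,\tau}_h|\lesssim \|\tilde\sigma_h-w\|\,\|\tau\|$, but $w\in W_h^{k-1}$ need not lie in $V_h^{k-1}$, and {\bf (A)} as stated here asserts boundedness of $\LRa{\cdot,\cdot}_h$ only on $V_h^{k-1}\times V_h^{k-1}$. In the concrete quadrature realization of Section~\ref{local-c} this boundedness on the larger space is immediate, and the abstract hypotheses in \cite{Lee-Winther} are presumably formulated so as to cover it; alternatively one can rearrange slightly, noting that \eqref{eq:B-cond} and \eqref{eq:B-cond2} together give $\LRa{w,\tau}_h=\LRa{w,\Pi_h\tau}$, which is an honest $L^2$ pairing and therefore bounded without any extra hypothesis.
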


\section{Construction of $\tilde{\mc{Q}}_r \Lm^k$} \label{sec:rect-eg}
In this section we construct a new family of finite element differential forms $\tilde{\mc{Q}}_r \Lm^k (\mc{T}_h)$ on cubical meshes $\mc{T}_h$ of $\Omega$
where the elements in $\mathcal{T}_h$ are Cartesian product of intervals.
If $r=1$, then $\tilde{\mc{Q}}_r \Lm^k$ is the $\mc{S}_1^+ \Lm^k$ space in \cite{Lee-Winther}. 
For $n=2, 3$ and $k = n-1$, $\tilde{\mc{Q}}_r \Lm^k$ spaces are the $H(\div)$ finite element spaces which were discussed in \cite{Lee-Yotov-etal}. The idea of these new elements construction in \cite{Lee-Winther} and \cite{Lee-Yotov-etal} is enriching the shape function space of the $\mc{Q}_1^- \Lm^k$ and the Raviart-Thomas-Nedelec elements on cubical meshes with $\dd$-free and divergence-free shape functions
in order to associate a basis of the shape function space to the degrees of freedom given by nodal point evaluation. The most technical difficulty of new element construction in \cite{Lee-Yotov-etal} is the unisolvency proof. For this,  the authors in \cite{Lee-Yotov-etal} used some features of the enriched shape functions observed from their explicit expressions.   
However, it is highly nontrivial to use the same argument to $\tilde{\mc{Q}}_r \Lm^k$ for general $n$ and $k$ because explicit forms of the enriched shape functions for general $n$ and $k$ are too complicated to use conventional unisolvency arguments.  
To circumvent this difficulty we will introduce new quantities of polynomial differential forms which allow us to extract useful features of polynomial differential forms for unisolvency proof. 

In the paper the space of polynomial or polynomial differential forms without specified domain will denote the space on $\R^n$. For example, $ \mc{Q}_r\Lm^k$ is the space 
of polynomial coefficient differential forms such that the polynomials coefficients are in $\mc{Q}_r (\R^n)$. We use $\mc{P} \Lm^k$ to denote the space of all differential forms with polynomial coefficients. 

For later discussion we introduce some additional notation for cubes in a hyperspace in $\R^n$.  
For given $\mathcal{I} = \{ i_1, \ldots, i_m\} \subset \{1, \ldots, n\}$ with $i_1 < i_2 < \ldots < i_m$, consider an $m$-dimensional hyperspace $F$ in $\R^n$ determined by fixing all $x_j$ coordinates of $j \not \in \mathcal{I}$.  
If $f$ is an $m$-dimensional cube in the $m$-dimensional hyperspace $F$,
then $\Sigma_k(f)$ with $k \le m$ is defined by the set of increasing injective map from $\{ 1, \ldots, k\}$ to $\mathcal{I}$. 
For $\s \in \Sigma_k(f)$ we will use $\llbracket \s \rrbracket$ to denote the range of $\s$, i.e., 
\[ 
\llbracket \s \rrbracket = \{ \s_1, \s_2, \ldots , \s_k \} \subset \mathcal{I}, 
\]
and $\s^*$ for $\s \in \Sigma_k(f)$ is the complementary sequence in $\Sigma_{m-k}(f)$ such that
\[
\llbracket \s \rrbracket \cup \llbracket \s^* \rrbracket = \mathcal{I}.
\]
For $\s \in \Sigma_k(f)$ with $1 \le k \le m$ we will use $\s - i$ for $i \in \jump{\s}$ to denote 
the element $\tau \in \Sigma_{k-1}(f)$ such that $\jump{\tau} = \jump{\s} \setminus \{i\}$. 
For $\s \in \Sigma_k(f)$ with $0 \le k \le m-1$, $\s + j$ is defined similarly for $j \in \jump{\s^*}$.
For $i \in \jump{\s^*}$ we let $\e(i, \s) = (-1)^l$ where $l = |\{ j \in \jump{\s} \,:\, j < i \}|$.
For each $\s \in \Sigma_k(f)$ we define 
$\dd x_\s = \dd x_{\s_1} \wedge \cdots \wedge \dd x_{\s_k}$ and the set $\{ \dd x_\s \, : \, \s \in \Sigma_k(f) \, \}$ is a basis of $\Alt^k(f)$.

Recall that $\Sigma_k$ is $\Sigma_k(f)$ with $\mathcal{I} = \{ 1, \ldots, n\}$. 
A differential $k$-form $u$ on $\Omega$ then admits the representation 
\algns{
u = \sum_{\s \in \Sigma_k} u_{\s} \dd x_{\s},
}
where  the coefficients $u_{\s}$'s are scalar functions on $\Omega$. Furthermore,  the exterior derivative $\dd u$ 
can be expressed as 
\algns{
\dd u = \sum_{\s \in \Sigma_k} \sum_{i =1}^n  \pd_i u_{\s} \dd x_i \wedge \dd x_{\s} ,
}
if $\pd_i u_{\s}$ is well-defined as a function on $\Omega$. 
The Koszul operator $\kappa : \Alt^k(\R^n) \to \Alt^{k-1}(\R^n)$ is defined by contraction with the vector $x$, i.e.,
$(\kappa u)_x = x \lrcorner u_x$. As a consequence of the alternating property of $\Alt^k(\R^n)$, it therefore follows that 
$\kappa \circ \kappa = 0$.
It also follows that 
\algns{ 
\kappa (\dd x_{\s}) = \kappa ( \dd x_{\s_1} \wedge \cdots \wedge \dd x_{\s_k} ) = \sum_{i=1}^k (-1)^{i+1} x_{\s_i} \dd x_{\s_1} \wedge \cdots \wedge \widehat{\dd x_{\s_i}} \wedge \cdots \wedge \dd x_{\s_k} ,
}
where $\widehat{\dd x_{\s_i}}$ means that the term $\dd x_{\s_i}$ is omitted. 
This definition is extended to the space of differential $k$-form on $\Omega$ by linearity, i.e., 
\algns{
\kappa u = \kappa { \sum_{\s \in \Sigma_k} u_{\s} \dd x_{\s} } = \sum_{\s \in \Sigma_k} u_{\s} \kappa( \dd x_{\s} ) .
}
For future reference we note that 
\algn{ \label{eq:kappa-new-def}
\kappa \dd x_\s = \sum_{i \in \jump{\s}} \e(i, \s-i) x_i \dd x_{\s-i} .
}

If $f$ is an $(n-1)$-dimensional hyperspace of $\R^n$ obtained by fixing one coordinate, for example, 
\[ 
f = \{\, x \in \R^n \, : \, x_n = c \, \},
\]
then we can define the Koszul operator $\kappa_f$ for forms defined on $f$ by 
$(\kappa_f v)_x = (x - x^f) \lrcorner v_x $, where $x^f = (0, \ldots ,0, c)$. We note that the vector $x- x^f$ is in the tangent space 
of $f$ for $x \in f$.
Since $\tr_f ((x - x^f) \lrcorner u) = \tr_f (x - x^f) \lrcorner u$ for $x \in  f$ and $(\kappa u)_x = (x-x^f) \lrcorner u_x + x^f \lrcorner u_x$, we can conclude that
\begin{equation}\label{eq:kappa-tr}
\tr_f \kappa u = \kappa_f \tr_f u + \tr_f (x^f \lrcorner u).
\end{equation}
For a multi-index $\alpha$ of $n$ nonnegative integers, $x^{\alpha} = x_1^{\alpha_1} \cdots x_n^{\alpha_n}$. 
If $u$ is in $\mc{Q}_r\Lm^k$, the space of polynomial $k$-forms with tensor product polynomials of order $r$, then $u$ can be expressed as 
\[
u = \sum_{ \s \in \Sigma_k}  u_\s \dd x_\s, \qquad u_\s  \in \mc{Q}_r .
\]
Denoting $\mc{H}_r \Lm^k$ the space of differential $k$-forms with homogeneous polynomial coefficients of degree $r$, we also have  the identity
\algn{ \label{eq:homotopy-formula}
(\kappa \dd + \dd \kappa ) u = (r + k) u , \qquad u \in \mc{H}_r \Lm^k ,
}
cf. \cite[Section 3]{AFW06}.
Finally, throughout this paper we set $\hat{T} = [-1, 1]^n$. 

\subsection{The shape function space and the degrees of freedom of $\tilde{\mc{Q}}_r \Lm^k$}
In this subsection we define the shape function space of $\tilde{\mc{Q}}_r \Lm^k$. 
The shape functions of $\tilde{\mc{Q}}_r \Lm^k$ will be obtained by enriching the shape functions of $\mc{Q}_r^- \Lm^k$. 
There are other families of cubical finite element differential forms, cf. for example \cite{Arnold-Awanou-2014,Christiansen-Gillette-2016,Cockburn-Qiu-2014,Gillette-Kloefkorn-2016}, but these spaces are not involved in the construction of our new elements.


If $m$ is a $k$-form given by $m = p\, \dd x_{\s}$, where $\s \in \Sigma_k$ and the coefficient polynomial $p$ is a monomial, then we will call $m$ form monomial.
Before we define the shape functions of $\tilde{\mc{Q}}_r \Lm^k$ let us introduce new quantities of form monomials.
For a polynomial differential form $u_\s \dd x_\s$ we call the indices in $\jump{\s}$ (in $\jump{\s^*}$, resp.) {\it conforming indices} ({\it nonconforming indices}, resp.). For a form monomial $m = c_\alpha x^\alpha \dd x_\s \not = 0$ and $\alpha_i = s$ for a conforming (nonconforming, resp.) index $i$ of $m$, we call $i$ a conforming (nonconforming, resp.) index of degree $s$. We also define 
the conforming and nonconforming $s$-degrees of $m = c_{\alpha} x^{\alpha} \dd x_{\s}$ by 
\algns{
\cdeg_s (m) &=  | \{ i \,:\, i \in \jump{\s} \text{ and } \alpha_i = s \} |, \\
\ncdeg_s (m) &= | \{ i \,:\, i \in \jump{\s^*} \text{ and } \alpha_i = s \} |.
}
We can define $\cdeg_s(v)$ and $\ncdeg_s(v)$ if these quantities are same for any form monomial of $v$.
We remark that $\ncdeg_1(m)$ is same as the {\it linear degree} in \cite{Arnold-Awanou-2014} for a form monomial $m$. In contrast to the linear degree, 
we do {\it not} define $\cdeg_s$ and $\ncdeg_s$ for general polynomial differential forms. 

The conforming degree gives a new characterization of the shape function space of $\mc{Q}_r^- \Lm^k$ by
\algn{ \label{eq:Qr-minus}
\mc{Q}_r^- \Lm^k = \spn \{ x^\alpha \dd x_\s \in \mc{Q}_r \Lm^k \,:\, \cdeg_r (x^\alpha \dd x_\s)  = 0 \}. 
}
Defining $\mc{B}_r \Lm^k$ as
\algn{ \label{eq:Br}
\mc{B}_r \Lm^k = \spn \{  x^\alpha \dd x_\s \in \mc{Q}_r \Lm^k \,:\, \cdeg_r (x^\alpha \dd x_\s)  > 0 \} ,
}
it is easy to see that 
\algn{ \label{eq:Qr-direct-sum}
\mc{Q}_r \Lm^k = \mc{Q}_r^- \Lm^k \oplus \mc{B}_r \Lm^k.
}
We define the shape function space $\tilde{\mc{Q}}_r \Lm^k$ as
\algn{ \label{eq:Qtilde-shape}
\tilde{\mc{Q}}_r \Lm^k = \mc{Q}_r^- \Lm^k + \dd \kappa \mc{B}_r \Lm^k .
}

\begin{lemma} \label{lemma:deg-invariance}
Let $m = x^{\alpha} \dd x_\s \in \mc{P} \Lm^k$, $1 \le k \le n-1$, be given with a multi-index $\alpha$ and a positive integer $s$. Assume that $m'$ and $m''$ are given as $m' = \pd_i x^\alpha \dd x_i \wedge \dd x_\s$ with $i \not \in \jump{\s}$ and $m'' = x^\alpha x_{\s_j} \dd x_{\s_1} \wedge \cdots \wedge \widehat{\dd x_{\s_j}} \wedge \cdots \wedge \dd x_{\s_k}$ with $ j \in \jump{\s}$. Then the following identities hold:
\algn{
 \label{eq:deg-1} \ncdeg_{s} (m') &= \ncdeg_{s} (m) - \delta_{s, \alpha_i} , \\
 \label{eq:deg-2} \ncdeg_{s} (m'') &= \ncdeg_{s} (m) + \delta_{\alpha_{\s_j}, s-1},  \\ 
 \label{eq:deg-3} \ncdeg_{s+1} (m) + \cdeg_s (m) &= \ncdeg_{s+1} (m') + \cdeg_s (m') \\
\notag   & = \ncdeg_{s+1} (m'') + \cdeg_s (m'') 
}
where $\delta_{i,j}$ is the Kronecker delta.
In particular, if $m \in \mc{B}_r \Lm^k$ and $m''$ is a form monomial of $\kappa m$ with $j$ which is a nonconforming index of degree $(r+1)$, then $j$ is a conforming index of degree $r$ in $m$.
\end{lemma}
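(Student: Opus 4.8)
The plan is to view $m'$ and $m''$ as the elementary building blocks of $\dd m$ and $\kappa m$, and to exploit a single-index principle: each of the two operations changes the role and the degree of exactly one coordinate index, leaving every other index (both its conforming/nonconforming status and its exponent) untouched. Indeed, passing from $m = x^\alpha \dd x_\s$ to $m' = \pd_i x^\alpha \,\dd x_i \wedge \dd x_\s$ with $i \notin \jump{\s}$ converts the nonconforming index $i$, of degree $\alpha_i$ in $m$, into a conforming index of degree $\alpha_i - 1$ in $m'$: the factor $\pd_i$ lowers the power of $x_i$ by one, while wedging with $\dd x_i$ moves $i$ into $\jump{\s}$. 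Dually, passing from $m$ to $m''$ converts the conforming index $\s_j$, of degree $\alpha_{\s_j}$, into a nonconforming index of degree $\alpha_{\s_j}+1$, since multiplication by $x_{\s_j}$ raises its power by one and deleting $\dd x_{\s_j}$ removes it from $\jump{\s}$. With this principle in hand (and assuming $m', m'' \neq 0$, so that the degrees are well defined), the three identities reduce to pure counting.

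For \eqref{eq:deg-1}, the only index whose nonconforming contribution changes is $i$, which exits the nonconforming set; it was counted in $\ncdeg_s(m)$ precisely when $\alpha_i = s$, yielding the decrement $\delta_{s,\alpha_i}$. For \eqref{eq:deg-2}, symmetrically, the sole change is that $\s_j$ enters the nonconforming set with degree $\alpha_{\s_j}+1$, contributing to $\ncdeg_s(m'')$ exactly when $\alpha_{\s_j}+1 = s$, giving the increment $\delta_{\alpha_{\s_j},\,s-1}$.

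For the invariance \eqref{eq:deg-3}, I would introduce the combined count $N_s := \ncdeg_{s+1} + \cdeg_s$ and again track only the migrating index. Under $m \mapsto m'$, the index $i$ contributes $\delta_{\alpha_i,\,s+1}$ to $\ncdeg_{s+1}(m)$ beforehand and $\delta_{\alpha_i-1,\,s}$ to $\cdeg_s(m')$ afterward; these two indicators coincide, so $N_s(m') = N_s(m)$. Under $m \mapsto m''$, the index $\s_j$ contributes $\delta_{\alpha_{\s_j},\,s}$ to $\cdeg_s(m)$ beforehand and $\delta_{\alpha_{\s_j}+1,\,s+1}$ to $\ncdeg_{s+1}(m'')$ afterward; again the indicators agree, so $N_s(m'') = N_s(m)$. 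This establishes both equalities in \eqref{eq:deg-3} at once.

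For the concluding assertion I would combine \eqref{eq:deg-2} with the membership $m \in \mc{B}_r \Lm^k \subset \mc{Q}_r \Lm^k$, which forces every exponent $\alpha_i \le r$ and hence $\ncdeg_{r+1}(m) = 0$. A form monomial $m''$ of $\kappa m$ is obtained by removing a single conforming index $\s_j$ and multiplying by $x_{\s_j}$, so \eqref{eq:deg-2} with $s = r+1$ gives $\ncdeg_{r+1}(m'') = \ncdeg_{r+1}(m) + \delta_{\alpha_{\s_j},\,r} = \delta_{\alpha_{\s_j},\,r}$. Therefore a nonconforming index $j$ of degree $r+1$ can occur in $m''$ only as the removed index $\s_j$ itself, since every inherited nonconforming index still has degree at most $r$; its occurrence forces $\alpha_{\s_j}=r$, which says exactly that $j = \s_j$ was a conforming index of degree $r$ in $m$. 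I expect no genuine obstacle here: the one point demanding care is the verification of the single-index principle, namely that $\dd$ and $\kappa$ move the affected index between $\jump{\s}$ and $\jump{\s^*}$ while shifting its exponent by $\mp 1$; everything downstream is Kronecker-delta bookkeeping.
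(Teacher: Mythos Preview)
Your proposal is correct and follows essentially the same approach as the paper: both arguments rest on the observation that passing from $m$ to $m'$ (respectively $m''$) alters the conforming/nonconforming status and exponent of exactly one index, so all three identities reduce to tracking that single index. Your packaging via the combined count $N_s = \ncdeg_{s+1} + \cdeg_s$ and Kronecker deltas is a bit tidier than the paper's explicit case split on whether $\alpha_i = s$, but the content is identical; the concluding assertion about $\kappa m$ is handled the same way in both proofs.
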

\begin{proof}
We show \eqref{eq:deg-1} and the first identity in \eqref{eq:deg-3}. 
If $\alpha_i = s$, then $i$ is a nonconforming index of degree $s$ of $m$ but not of $m'$, 
and the other nonconforming indices of degree $s$ of $m$ and $m'$ are same, so $\ncdeg_s(m') = \ncdeg_s(m) - 1$. If $s >1$, then the set of conforming indices of $m'$ with degree $s-1$ is the union of the set of conforming indices of $m$ with degree $s-1$ and $\{i\}$.
Therefore the first identity in \eqref{eq:deg-3} holds. 
If $\alpha_i \not = s$, then the sets of nonconforming indices of degree $s$ of both $m$ and $m'$ are same, so $\ncdeg_s(m') = \ncdeg_s(m)$. If $s>1$, then the sets of conforming indices of degree $s-1$ of both $m$ and $m'$ are same as well, so the first identity in \eqref{eq:deg-3} holds.

We show \eqref{eq:deg-2} and the second identity in \eqref{eq:deg-3}. 
If $\alpha_j = s-1$, then $j$ is a nonconforming index of degree $s$ of $m''$ in addition to the nonconforming indices of degree $s$ of $m$, so \eqref{eq:deg-2} holds.
If $\alpha_i \not = s-1$, then the sets of nonconforming indices of degree $s$ of $m$ and $m''$ are same, so \eqref{eq:deg-2} again holds. The second identity in \eqref{eq:deg-3} can be verified in a way similar to the argument used for the first identity in \eqref{eq:deg-3}. 
 
For the particular case, if $m \in \mc{B}_r \Lm^k$ and $m''$ is a form monomial of $\kappa m$ which has a nonconforming index of degree $(r+1)$, then the nonconforming index of $m''$ must be $\s_j$ and $\alpha_{\s_j} = r$ because $\alpha_l \le r$ for $1 \le l \le n$ and $\alpha_{\s_j}+1 = r+1$. This completes the proof.
\end{proof}

\begin{cor} \label{cor:inc-dec}
Let $m$ be a form monomial. Then for any form monomial $\tilde{m}$ in $\dd m$, $\cdeg_s(\tilde{m}) \ge \cdeg_s (m)$ for any $s \ge 1$. Similarly, for any form monomial $\tilde{m}$ in $\dd m$, $\ncdeg_s(\tilde{m}) \ge \ncdeg_s (m)$ if $s \ge 1$. 
\end{cor}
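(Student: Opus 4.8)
The plan is to reduce both assertions to Lemma~\ref{lemma:deg-invariance} by recognizing that the form monomials appearing in $\dd m$ are exactly the monomials $m'$ treated there. For $m = x^\alpha\,\dd x_\s$ one has $\dd m = \sum_{i} \pd_i x^\alpha\,\dd x_i \wedge \dd x_\s$, and the summand is nonzero only when $\dd x_i \wedge \dd x_\s \ne 0$, i.e.\ $i \notin \jump{\s}$, and when $\pd_i x^\alpha \ne 0$, i.e.\ $\alpha_i \ge 1$. Hence every nonzero form monomial $\tilde m$ of $\dd m$ is, up to a nonzero scalar, one of the monomials $m' = \pd_i x^\alpha\,\dd x_i \wedge \dd x_\s$ with $i \notin \jump{\s}$ from Lemma~\ref{lemma:deg-invariance}, so it suffices to compare the $s$-degrees of $m$ with those of $m'$ for an arbitrary admissible $i$.

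For the conforming part I would route through the first identity in \eqref{eq:deg-3}, namely $\ncdeg_{s+1}(m) + \cdeg_s(m) = \ncdeg_{s+1}(m') + \cdeg_s(m')$, which rearranges to $\cdeg_s(m') - \cdeg_s(m) = \ncdeg_{s+1}(m) - \ncdeg_{s+1}(m')$. Applying \eqref{eq:deg-1} with $s$ replaced by $s+1$ evaluates the right-hand side as $\delta_{s+1,\alpha_i} \ge 0$, so $\cdeg_s(\tilde m) = \cdeg_s(m) + \delta_{s+1,\alpha_i} \ge \cdeg_s(m)$ for every $s \ge 1$, with strict increase precisely when the differentiated variable carried exponent $s+1$. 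This is the first assertion.

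For the nonconforming part the identity \eqref{eq:deg-1} applies directly and gives $\ncdeg_s(\tilde m) = \ncdeg_s(m) - \delta_{s,\alpha_i}$. Since the Kronecker delta is nonnegative, passing to a monomial of $\dd m$ never raises the nonconforming $s$-degree and lowers it by one exactly when $\alpha_i = s$. Combining the two computations records the increase/decrease behaviour of the corollary: a form monomial of $\dd m$ has conforming $s$-degree at least, and nonconforming $s$-degree at most, that of $m$, for every $s \ge 1$. The clean underlying reason is that differentiating in $x_i$ simultaneously drops $\alpha_i$ by one and migrates the index $i$ out of the nonconforming block $\jump{\s^*}$ into the conforming block of $\dd x_i \wedge \dd x_\s$.

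The argument is pure bookkeeping and I do not expect a genuine obstacle. The one place needing care is the conforming case: Lemma~\ref{lemma:deg-invariance} furnishes an explicit transformation rule for $\ncdeg$ in \eqref{eq:deg-1} but not a standalone rule for $\cdeg$, so one cannot simply read off the conforming change and must instead exploit the coupling \eqref{eq:deg-3} and the index shift just described. Once that substitution is made, both inequalities fall out of the sign of a single Kronecker delta.
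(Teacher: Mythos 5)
Your treatment of the first assertion is correct and is exactly the paper's (one-line) intended argument: every nonzero form monomial of $\dd m$ is, up to a nonzero scalar, one of the monomials $m'$ of Lemma~\ref{lemma:deg-invariance}, and combining the first identity of \eqref{eq:deg-3} with \eqref{eq:deg-1} at level $s+1$ gives $\cdeg_s(m') = \cdeg_s(m) + \delta_{s+1,\alpha_i} \ge \cdeg_s(m)$. The paper's proof is nothing more than this reduction to the lemma, so on the conforming half there is nothing to criticize.

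The nonconforming half is where your write-up fails. Your own computation, via \eqref{eq:deg-1}, yields $\ncdeg_s(\tilde m) = \ncdeg_s(m) - \delta_{s,\alpha_i} \le \ncdeg_s(m)$, which is the \emph{reverse} of the printed inequality $\ncdeg_s(\tilde m) \ge \ncdeg_s(m)$; you then restate the conclusion as ``nonconforming $s$-degree at most that of $m$'' and declare the corollary proved, without noticing that this is not what the corollary asserts. In fact the second assertion as printed is false: take $n=2$, $m = x_2\,\dd x_1$, $s=1$; then $\ncdeg_1(m) = 1$, while $\dd m = -\dd x_1 \wedge \dd x_2$ has constant coefficient and no nonconforming indices, so $\ncdeg_1(\tilde m) = 0$. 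The statement evidently contains a typo, and there are two consistent repairs: either the second occurrence of $\dd m$ should be $\kappa m$, in which case \eqref{eq:deg-2} immediately gives the claimed $\ge$ since $\delta_{\alpha_{\s_j},s-1} \ge 0$ (this $\dd$/$\cdeg$ versus $\kappa$/$\ncdeg$ pairing is what the paper actually exploits later, e.g.\ in Lemma~\ref{lemma:Blm}(a) and the unisolvency proof), or the inequality should be reversed for $\dd m$, which is what you actually proved and which matches the label ``inc-dec.'' A correct blind attempt must flag this inconsistency explicitly and prove one of the corrected statements; claiming that a derivation of $\le$ establishes the stated $\ge$ is a genuine error, even though the underlying bookkeeping is sound.
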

\begin{proof}
It is easy to check the assertions by Lemma~\ref{lemma:deg-invariance}.
\end{proof}
\begin{cor} \label{cor:inclusion}
The following inclusions hold:
\algn{
\label{eq:inclusion-1} \kappa \mc{Q}_r^- \Lm^k \subset \mc{Q}_r^- \Lm^{k-1}, \\
\label{eq:inclusion-2} \dd \mc{B}_r \Lm^k \subset \mc{B}_r \Lm^{k+1} \cap \dd \kappa \mc{B}_r \Lm^{k+1}, \\
\label{eq:inclusion-3} \dd \mc{Q}_r \Lm^k \subset \tilde{\mc{Q}}_r \Lm^{k+1}, \\
\label{eq:inclusion-4} \dd \kappa \mc{Q}_r \Lm^k \subset \tilde{\mc{Q}}_r \Lm^k .
}
\end{cor}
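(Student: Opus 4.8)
The plan is to convert each inclusion into a statement about the degree quantities $\cdeg_s$ and $\ncdeg_s$, so that Lemma~\ref{lemma:deg-invariance} and Corollary~\ref{cor:inc-dec} handle the bookkeeping, supplemented by the Koszul homotopy identity \eqref{eq:homotopy-formula}. Since every space in sight is spanned by form monomials and both $\dd$ and $\kappa$ are linear, it suffices to argue on a single form monomial $m = x^\alpha \dd x_\s$ and then sum. I would first record the auxiliary fact $\dd \mc{Q}_r^- \Lm^k \subset \mc{Q}_r^- \Lm^{k+1}$: for $m \in \mc{Q}_r^- \Lm^k$ one has $\ncdeg_{r+1}(m) = 0$ (it lies in $\mc{Q}_r$) and $\cdeg_r(m) = 0$ (it lies in $\mc{Q}_r^-$), so the $m'$ identity in \eqref{eq:deg-3} with $s = r$ gives $\ncdeg_{r+1}(m') + \cdeg_r(m') = \ncdeg_{r+1}(m) + \cdeg_r(m) = 0$ for each term $m'$ of $\dd m$, placing $m'$ in $\mc{Q}_r^- \Lm^{k+1}$.

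For \eqref{eq:inclusion-1} the same mechanism applies to $\kappa$: by \eqref{eq:kappa-new-def}, $\kappa m$ is a combination of the monomials $m''$ of Lemma~\ref{lemma:deg-invariance}, and the $m''$ identity in \eqref{eq:deg-3} with $s = r$ forces $\ncdeg_{r+1}(m'') + \cdeg_r(m'') = 0$; the first term keeps $m''$ in $\mc{Q}_r$ and the second places it in $\mc{Q}_r^- \Lm^{k-1}$. For \eqref{eq:inclusion-2} I would split the intersection. The containment $\dd \mc{B}_r \Lm^k \subset \mc{B}_r \Lm^{k+1}$ is immediate from Corollary~\ref{cor:inc-dec}: a monomial $m \in \mc{B}_r$ has $\cdeg_r(m) > 0$, and every monomial $\tilde m$ of $\dd m$ satisfies $\cdeg_r(\tilde m) \ge \cdeg_r(m) > 0$ while remaining in $\mc{Q}_r$.

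The harder half, $\dd \mc{B}_r \Lm^k \subset \dd \kappa \mc{B}_r \Lm^{k+1}$, is where I expect the real work. Here I would decompose $m$ into its homogeneous part of degree $d = |\alpha|$, noting $d \ge r \ge 1$ since some conforming index has degree $r$, so $\dd m$ is homogeneous of degree $d-1$ and a $(k+1)$-form. Because $\dd \dd m = 0$, applying \eqref{eq:homotopy-formula} to $\dd m$ yields $\dd \kappa(\dd m) = (d+k)\,\dd m$ with $d + k \ge 2$, whence $\dd m = \dd \kappa\bigl(\tfrac{1}{d+k}\dd m\bigr)$; since $\tfrac{1}{d+k}\dd m \in \mc{B}_r \Lm^{k+1}$ by the first half, this gives membership in $\dd \kappa \mc{B}_r \Lm^{k+1}$.

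Finally I would assemble \eqref{eq:inclusion-3} and \eqref{eq:inclusion-4} from the direct sum \eqref{eq:Qr-direct-sum}. For \eqref{eq:inclusion-3}, on the $\mc{B}_r$ summand use \eqref{eq:inclusion-2} to land in $\dd \kappa \mc{B}_r \Lm^{k+1} \subset \tilde{\mc{Q}}_r \Lm^{k+1}$, and on the $\mc{Q}_r^-$ summand use the auxiliary fact $\dd \mc{Q}_r^- \Lm^k \subset \mc{Q}_r^- \Lm^{k+1} \subset \tilde{\mc{Q}}_r \Lm^{k+1}$. For \eqref{eq:inclusion-4}, on the $\mc{B}_r$ summand the term $\dd \kappa \mc{B}_r \Lm^k$ is part of the definition \eqref{eq:Qtilde-shape} of $\tilde{\mc{Q}}_r \Lm^k$, while on the $\mc{Q}_r^-$ summand I chain \eqref{eq:inclusion-1} with the auxiliary fact to get $\dd \kappa \mc{Q}_r^- \Lm^k \subset \dd \mc{Q}_r^- \Lm^{k-1} \subset \mc{Q}_r^- \Lm^k \subset \tilde{\mc{Q}}_r \Lm^k$. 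The main obstacle is the second half of \eqref{eq:inclusion-2}; everything else is degree bookkeeping, whereas returning $\dd \mc{B}_r$ inside $\dd \kappa \mc{B}_r$ genuinely needs the homotopy identity together with the preliminary fact that $\dd$ preserves $\mc{B}_r$.
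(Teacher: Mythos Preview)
Your proposal is correct and follows essentially the same route as the paper: both use the degree invariants of Lemma~\ref{lemma:deg-invariance} for \eqref{eq:inclusion-1} and the first half of \eqref{eq:inclusion-2}, the homotopy identity \eqref{eq:homotopy-formula} applied to $\dd u$ (with $\dd\dd u=0$) for the second half of \eqref{eq:inclusion-2}, and then assemble \eqref{eq:inclusion-3}--\eqref{eq:inclusion-4} from the direct sum \eqref{eq:Qr-direct-sum}. Your write-up simply spells out the degree bookkeeping more explicitly than the paper does.
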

\begin{proof}
The inclusion \eqref{eq:inclusion-1} and $\dd \mc{B}_r \Lm^k \subset \mc{B}_r \Lm^{k+1}$ can be easily checked by the characterizations of $\mc{Q}_r^- \Lm^k$, $\mc{B}_r \Lm^k$ in \eqref{eq:Qr-minus}, \eqref{eq:Br}, and by Lemma~\ref{lemma:deg-invariance}. To show \eqref{eq:inclusion-2}, let $u \in \mc{H}_s \Lm^{k} \cap \mc{B}_r \Lm^{k}$ for a positive integer $s$. By \eqref{eq:homotopy-formula}, $(\dd\kappa + \kappa \dd)\dd u = (s+k) \dd u = \dd \kappa \dd u \in \dd \kappa \mc{B}_r \Lm^{k+1}$. As a consequence, $\dd u \in \mc{B}_r \Lm^{k+1} \cap \dd \kappa \mc{B}_r \Lm^{k+1}$. \eqref{eq:inclusion-3} follows from \eqref{eq:inclusion-2} and $\dd \mc{Q}_r^- \Lm^k \subset \mc{Q}_r^- \Lm^{k+1}$. Finally, \eqref{eq:inclusion-4} follows from \eqref{eq:Qr-direct-sum}, \eqref{eq:inclusion-1}, the fact $\dd \mc{Q}_r^- \Lm^k \subset \mc{Q}_r^- \Lm^{k+1}$, and \eqref{eq:inclusion-3}. 
\end{proof}

We now prove that $\tilde{\mc{Q}}_r \Lm^k$ is invariant under dilation and translation. 
\begin{lemma} \label{invariant}
If $\phi : \R^n \ra \R^n$ is a composition of dilation and translation, then 
$\phi^* \tilde{\mc{Q}}_r \Lm^k \subset \tilde{\mc{Q}}_r \Lm^k$, where $\phi^*$ is the pullback of $\phi$.
\end{lemma}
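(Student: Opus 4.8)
The plan is to factor $\phi$ as $\phi = T \circ D$, where $D(x) = Ax$ is a dilation with diagonal $A = \diag(a_1, \dots, a_n)$ (this covers the maps carrying the reference cube onto the Cartesian-product elements) and $T(x) = x + b$ is a translation, and then to use $(T \circ D)^* = D^* \circ T^*$ to reduce the claim to showing that $D^*$ and $T^*$ each map $\tilde{\mc{Q}}_r \Lm^k$ into itself. Since $\tilde{\mc{Q}}_r \Lm^k = \mc{Q}_r^- \Lm^k + \dd \kappa \mc{B}_r \Lm^k$, it suffices to follow the two summands separately under each map. Throughout I use that the exterior derivative commutes with pullback, $\phi^* \dd = \dd \phi^*$.

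The dilation $D^*$ is the easy case. Since $D^* \dd x_i = a_i \dd x_i$ and $D^* x^\alpha = \LRp{\prod_j a_j^{\alpha_j}} x^\alpha$, the map $D^*$ sends each form monomial $x^\alpha \dd x_\s$ to a nonzero scalar multiple of itself, so it preserves $\mc{Q}_r^- \Lm^k$, $\mc{B}_r \Lm^k$ and $\mc{Q}_r \Lm^k$ exactly. Moreover $\kappa$ commutes with $D^*$: the radial field defining $\kappa$ is fixed by the pullback of a linear map, which one checks directly on $\kappa \dd x_\s$ from \eqref{eq:kappa-new-def} using $a_i \prod_{l \in \jump{\s-i}} a_l = \prod_{l \in \jump{\s}} a_l$. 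Together with $\dd D^* = D^* \dd$ this gives $D^*(\dd \kappa \mc{B}_r \Lm^k) = \dd \kappa (D^* \mc{B}_r \Lm^k) = \dd \kappa \mc{B}_r \Lm^k$, so $D^*$ preserves $\tilde{\mc{Q}}_r \Lm^k$.

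The translation $T^*$ is the heart of the matter, and the hard part will be that $\kappa$ does \emph{not} commute with $T^*$. For the first summand, $T^*(x^\alpha \dd x_\s) = \prod_i (x_i + b_i)^{\alpha_i} \dd x_\s$, and as each factor has degree at most $\alpha_i$ in $x_i$, every resulting monomial keeps all conforming degrees at most $r-1$; hence $T^* \mc{Q}_r^- \Lm^k \subset \mc{Q}_r^- \Lm^k$. For the second summand I will use the correction identity
\[
T^* \kappa = \kappa\, T^* + b \lrcorner\, T^* ,
\]
where $b \lrcorner$ denotes contraction with the constant vector $b$; this follows from the fact that $T^*$ sends the radial field $X$ to $X + b$, and can equally be verified directly on a form monomial by comparing $\kappa \dd x_\s$ with $b \lrcorner \dd x_\s$. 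Applying $\dd$ and evaluating on $m \in \mc{B}_r \Lm^k$ gives
\[
T^*(\dd \kappa\, m) = \dd \kappa (T^* m) + \dd (b \lrcorner\, T^* m).
\]
Here $T^* m \in \mc{Q}_r \Lm^k$, so $\dd \kappa (T^* m) \in \tilde{\mc{Q}}_r \Lm^k$ by \eqref{eq:inclusion-4}; and $b \lrcorner\, T^* m \in \mc{Q}_r \Lm^{k-1}$, since contraction with a constant vector lowers the form degree without raising the polynomial degree, so $\dd (b \lrcorner\, T^* m) \in \tilde{\mc{Q}}_r \Lm^k$ by \eqref{eq:inclusion-3}. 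Thus $T^*(\dd \kappa \mc{B}_r \Lm^k) \subset \tilde{\mc{Q}}_r \Lm^k$.

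The one real subtlety is precisely this failure of $\kappa$ to commute with translation; the correction identity converts the failure into the constant-vector contraction $b \lrcorner T^* m$, whose $\dd$-image is absorbed by \eqref{eq:inclusion-3}, while the genuine Koszul part $\dd\kappa(T^*m)$ is absorbed by \eqref{eq:inclusion-4}. With both summands controlled under $D^*$ and $T^*$, the factorization $\phi^* = D^* \circ T^*$ yields $\phi^* \tilde{\mc{Q}}_r \Lm^k \subset \tilde{\mc{Q}}_r \Lm^k$, as claimed.
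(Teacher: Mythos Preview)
Your proof is correct and follows essentially the same approach as the paper: both use the correction identity $\phi^*\kappa = \kappa\phi^* + b\lrcorner\phi^*$ (the paper cites \cite[Section 3.2]{AFW06}) and then absorb the resulting terms via the inclusions \eqref{eq:inclusion-3} and \eqref{eq:inclusion-4}. The only difference is organizational: the paper treats $\phi(x)=Ax+b$ in a single pass, while you factor $\phi=T\circ D$ and handle dilation and translation separately, making the commutation $D^*\kappa=\kappa D^*$ explicit; this is a presentational choice rather than a different argument.
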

\begin{proof}

Let $\phi (x) = A x + b$ for a given invertible $n \times n$ diagonal matrix $A$ and a vector $b \in \R^n$.
To show $\phi^* \tilde{\mc{Q}}_r \Lm^k \subset \tilde{\mc{Q}}_r \Lm^k$, assume that $u \in \tilde{\mc{Q}}_r \Lm^k$ is written as $u = u^- + \dd \kappa u^+$ with $u^- \in \mc{Q}_r^- \Lm^k$ and $u^+ \in \mc{B}_r \Lm^k$. Then we have  
\algns{
\phi^* u = \phi^* u^- + \phi^* \dd \kappa u^+ = \phi^* u^- + \dd \phi^* \kappa u^+ = \phi^* u^- + \dd \kappa \phi^* u^+ + b \lrcorner \dd(\phi^* u^+ )
}
where we used $\phi^* \kappa u^+ = \kappa \phi^* u^+ + b \lrcorner (\phi^* u^+) $ in the last equality (cf. \cite[Section 3.2]{AFW06}).
We can easily check $\phi^* u^- \in \Q_r^-\Lm^k$ from the definition of $\phi^*$, and $\dd \kappa \mc{Q}_r^- \Lm^k \subset \mc{Q}_r^- \Lm^k$ from \eqref{eq:deg-2} and \eqref{eq:deg-1}. 
From \eqref{eq:Qr-direct-sum} we have 
\[
\dd \kappa \phi^* u^+ \in \dd \kappa \Q_r\Lm^k = \dd \kappa (\mc{Q}_r^- \Lm^k \oplus \mc{B}_r \Lm^k) \subset \mc{Q}_r^- \Lm^k 
+ \dd\kappa \mc{B}_r \Lm^k = \tilde{\mc{Q}}_r \Lm^k.
\]
It remains  to show 
\algn{ \label{dphi-up}
\dd (b \lrcorner (\phi^* u^+) ) \in \tilde{\mc{Q}}_r \Lm^k .
}
To see this, note that $b \lrcorner (\phi^* u^+)  \in \mc{Q}_r \Lm^{k-1}$. 
By \eqref{eq:inclusion-3} we have 
$ \dd (b \lrcorner (\phi^* u^+) ) \in 
 \tilde{\mc{Q}}_r \Lm^k$,
so \eqref{dphi-up} is proved.
\end{proof}

\begin{lemma} \label{lemma:Blm} The following hold:
\begin{itemize}
 \item[(a)] \label{B1m-1} For a form monomial $m\not = 0$ in $\mc{B}_r \Lm^k$, $\kappa m$ generates at least one form monomial whose nonconforming $(r+1)$-degree is $1$. 
 \item[(b)] The operator $\dd\kappa$ is injective on $\mc{B}_r \Lm^k$.
\end{itemize}
\end{lemma}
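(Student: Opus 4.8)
The plan is to treat the two parts in order, with part (a) feeding directly into the bookkeeping that drives part (b). For (a) I would argue straight from the Koszul formula \eqref{eq:kappa-new-def}. If $m = x^\alpha \dd x_\s \in \mc{B}_r\Lm^k$ is a nonzero form monomial, then by the definition \eqref{eq:Br} we have $\cdeg_r(m) > 0$, so there is a conforming index $i_0 \in \jump{\s}$ with $\alpha_{i_0} = r$. The corresponding summand of $\kappa m$ is, up to the sign $\e(i_0, \s - i_0)$, the monomial $x^{\alpha + e_{i_0}} \dd x_{\s - i_0}$, in which $i_0$ has moved to the nonconforming position with exponent $r+1$ while every other exponent remains $\le r$; hence this monomial has $\ncdeg_{r+1} = 1$. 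Since the summands of $\kappa m$ indexed by distinct $i \in \jump{\s}$ sit on the distinct basis forms $\dd x_{\s - i}$, no cancellation can occur, so $\kappa m$ indeed generates such a monomial. This is exactly the ``in particular'' clause of Lemma~\ref{lemma:deg-invariance}, read forward, and I note for later that $\kappa$ raises exactly one exponent, so no term of $\kappa m$ can have $\ncdeg_{r+1} \ge 2$.

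For (b) I would first reduce to the homogeneous level and then to the Koszul level. Since $\mc{B}_r\Lm^k$ is spanned by form monomials, every $u \in \mc{B}_r\Lm^k$ decomposes into homogeneous components $u_s \in \mc{B}_r\Lm^k \cap \mc{H}_s\Lm^k$; and because $\kappa$ raises and $\dd$ lowers the polynomial degree by one, $\dd\kappa$ preserves it, so $\dd\kappa u = 0$ is equivalent to $\dd\kappa u_s = 0$ for every $s$. It therefore suffices to treat homogeneous $u$. For such $u$ of degree $s$ with $\dd\kappa u = 0$, the homotopy identity \eqref{eq:homotopy-formula} gives $\kappa\dd u = (s+k)u$; applying $\kappa$ and using $\kappa\circ\kappa = 0$ yields $(s+k)\kappa u = 0$, and since $s + k \ge 1$ this forces $\kappa u = 0$. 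Thus (b) is reduced to showing that $\kappa$ is injective on $\mc{B}_r\Lm^k$.

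The heart of the argument, and the step I expect to be the main obstacle, is this injectivity of $\kappa$, which I would establish by the $\ncdeg_{r+1}$ bookkeeping from (a). Write $u = \sum_m c_m\, x^{\alpha_m} \dd x_{\s_m} \in \mc{B}_r\Lm^k$ and isolate the part of $\kappa u$ spanned by monomials with $\ncdeg_{r+1} = 1$. By the computation in (a), the monomials contributed here are precisely $x^{\alpha_m + e_i}\dd x_{\s_m - i}$ for those conforming indices $i$ of $m$ with $(\alpha_m)_i = r$, carried by the nonzero coefficient $c_m\,\e(i, \s_m - i)$. The crucial claim is that the assignment $(m, i) \mapsto x^{\alpha_m + e_i}\dd x_{\s_m - i}$ is injective on such pairs: given a target monomial $x^\beta \dd x_\rho$ with $\ncdeg_{r+1} = 1$, its unique nonconforming index of exponent $r+1$ must be the boosted index $i$, after which $\jump{\s_m} = \jump{\rho}\cup\{i\}$ and $\alpha_m = \beta - e_i$ are forced, recovering $m$ uniquely. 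Hence these monomials are pairwise distinct across all relevant $(m,i)$, and no cancellation occurs in the $\ncdeg_{r+1} = 1$ part of $\kappa u$.

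Consequently $\kappa u = 0$ forces every coefficient $c_m\,\e(i, \s_m - i)$ to vanish; and by (a) every nonzero $m$ in the spanning set of $\mc{B}_r\Lm^k$ supplies at least one such pair $(m,i)$, so $c_m = 0$ for all $m$ and $u = 0$. Combined with the homotopy reduction this gives the injectivity of $\dd\kappa$ on $\mc{B}_r\Lm^k$. The only genuinely delicate point is the injective correspondence of the third paragraph; everything else is a direct application of Lemma~\ref{lemma:deg-invariance}, Corollary~\ref{cor:inc-dec}, and the homotopy formula \eqref{eq:homotopy-formula}, and it is precisely in that correspondence that the new quantities $\cdeg_s$ and $\ncdeg_s$ do the essential work.
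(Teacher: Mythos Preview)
Your proof is correct and follows essentially the same route as the paper: part (a) via the Koszul formula picking out a conforming index of degree $r$, then reducing (b) to injectivity of $\kappa$ on $\mc{B}_r\Lm^k$, and establishing the latter by showing that the $\ncdeg_{r+1}=1$ monomials produced by $\kappa$ on the basis set are pairwise distinct (since the unique nonconforming index of degree $r+1$ recovers the source monomial). The only cosmetic difference is in the reduction step: the paper simply invokes the standard fact that $\dd$ is injective on the image of $\kappa$, while you spell this out through the homogeneous decomposition and the homotopy formula \eqref{eq:homotopy-formula}; these are the same argument in different clothing.
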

\begin{proof}
For $m = x^\alpha \dd x_\s \in \mc{B}_r \Lm^k$ there is at least one conforming index $\s_i$ such that $\alpha_{\s_i} = r$. 
Then $x^\alpha \kappa(\dd x_\s)$ has at least one form monomial such that $\s_i$ is a nonconforming index and its polynomial coefficient has $x_{\s_i}^{r+1}$ as a factor, so (a) is proved.

For the injectivity of $\dd\kappa$ on $\mc{B}_r \Lm^k$, it suffices to show that $\kappa$ is injective on $\mc{B}_r \Lm^k$
because $\dd$ is injective on the image of $\kappa$. To show $\kappa$ is injective on $\mc{B}_r \Lm^k$, we show that 
form monomials with positive nonconforming $(r+1)$-degree generated by $\kappa m$ for $m \in \mc{B} := \{ x^\alpha \dd x_\s \,:\, \cdeg_r (x^\alpha \dd x_\s) > 0 \}$ are distinct. More precisely, if $\kappa m$ and $\kappa \tilde{m}$ for $m, \tilde{m} \in \mc{B}$ have a same form monomial (up to $\pm 1$) whose nonconforming $(r+1)$-degree is 1, then $m= \tilde{m}$. To show it by contradiction, 
let $m = x^{\alpha} \dd x_\s$ and $\tilde{m} = x^{\tilde{\alpha}} \dd x_{\tilde{\s}}$ be two distinct elements in $\mc{B}$ and 
assume that $\kappa m$ and $\kappa \tilde{m}$ have a common form monomial with nonconforming index of degree $(r+1)$. 
From the definition of $\kappa$ and the common form monomial assumption, there exist $\s_i \in \jump{\s}$ and $\tilde{\s}_i \in \jump{\tilde{\s}}$ such that
\algn{\label{eq:kappam-identity}
x_{\s_i} x^{\alpha}  \dd x_{\s_1} \wedge \cdots \wedge \widehat{\dd x_{\s_i}} \wedge \cdots \wedge \dd x_{\s_k} = 
\pm x_{\tilde{\s}_i} x^{\tilde{\alpha}}  \dd x_{\tilde{\s}_1} \wedge \cdots \wedge \widehat{\dd x_{\tilde{\s}_i}} \wedge \cdots \wedge \dd x_{\tilde{\s}_k} .
}
Since $\s_i$ and $\tilde{\s}_i$ are the only nonconforming indices of degree $(r+1)$ by Lemma~\ref{lemma:deg-invariance}, $\s_i = \tilde{\s}_i$ and therefore $\dd x_\s = \dd x_{\tilde{\s}}$. Moreover, comparison of $x^{\alpha}$ and $x^{\tilde{\alpha}}$ leads to $\alpha = \tilde{\alpha}$, so it contradicts to $x^{\alpha} \dd x_\s \not = x^{\tilde{\alpha}} \dd x_{\tilde{\s}}$. 
\end{proof}

The following key result is a consequence of the above lemma.
\begin{theorem} For  $ 0 \leq k \leq n$ it holds that 
\algns{
\dim \tilde{\mc{Q}}_r \Lm^k = \pmat{n \\ k} (r+1)^n .
}
\end{theorem}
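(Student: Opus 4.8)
The plan is to reduce the claim to two facts: that $\dd\kappa$ is injective on $\mc{B}_r\Lm^k$, and that the sum defining $\tilde{\mc{Q}}_r\Lm^k$ in \eqref{eq:Qtilde-shape} is \emph{direct}. Granting both, Lemma~\ref{lemma:Blm}(b) gives $\dim\dd\kappa\mc{B}_r\Lm^k = \dim\mc{B}_r\Lm^k$, and directness yields
\[
\dim\tilde{\mc{Q}}_r\Lm^k = \dim\mc{Q}_r^-\Lm^k + \dim\dd\kappa\mc{B}_r\Lm^k = \dim\mc{Q}_r^-\Lm^k + \dim\mc{B}_r\Lm^k = \dim\mc{Q}_r\Lm^k,
\]
where the last equality is the decomposition \eqref{eq:Qr-direct-sum}. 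Since $\dim\mc{Q}_r\Lm^k = \binom{n}{k}(r+1)^n$ (each coefficient ranges over $\mc{Q}_r$, of dimension $(r+1)^n$, and there are $\binom{n}{k}$ basis forms $\dd x_\s$), the theorem follows. Thus the entire content is the directness of $\mc{Q}_r^-\Lm^k + \dd\kappa\mc{B}_r\Lm^k$.

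To prove directness I would attach to each form monomial $m$ the quantity $q(m) := \ncdeg_{r+1}(m) + \cdeg_r(m)$. Taking $s=r$ in \eqref{eq:deg-3} shows exactly that $q$ is unchanged when a single term of $\dd$ or a single term of $\kappa$ is applied to a form monomial; hence $q$ is preserved, term by term, under the composition $\dd\kappa$. Every form monomial of $\mc{B}_r\Lm^k$ has $\cdeg_r\ge 1$ by \eqref{eq:Br} and, lying in $\mc{Q}_r\Lm^k$, has $\ncdeg_{r+1}=0$, so it carries $q\ge 1$; consequently every form monomial occurring in $\dd\kappa w$, for any $w\in\mc{B}_r\Lm^k$, also has $q\ge 1$ (cancellation among the contributing chains only deletes monomials and cannot change the value of $q$ on those that survive). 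On the other hand, by the characterization \eqref{eq:Qr-minus} every form monomial of $\mc{Q}_r^-\Lm^k$ has $\cdeg_r=0$ and $\ncdeg_{r+1}=0$, i.e. $q=0$. Since the form monomials form a basis of $\mc{P}\Lm^k$, the two subspaces $\mc{Q}_r^-\Lm^k$ and $\dd\kappa\mc{B}_r\Lm^k$ are spanned by disjoint sets of basis monomials (those with $q=0$ versus those with $q\ge 1$), so their intersection is trivial and the sum is direct.

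The main obstacle is precisely this directness, and the reason a naive argument fails is instructive: one cannot simply track $\ncdeg_{r+1}$, because applying $\dd$ to the degree-$(r+1)$ monomial produced by $\kappa$ (guaranteed by Lemma~\ref{lemma:Blm}(a)) has a ``return'' term — differentiating the raised variable — that drops the nonconforming $(r+1)$-degree back to zero. The combination $q=\ncdeg_{r+1}+\cdeg_r$ is engineered so that this return term, which converts a nonconforming index of degree $r+1$ into a conforming index of degree $r$, leaves $q$ invariant; this is exactly the content of \eqref{eq:deg-3}, and it is what makes the bookkeeping close. Finally, I would dispose of the extreme cases directly: for $k=0$ there are no conforming indices, so $\mc{B}_r\Lm^0=\{0\}$ and $\tilde{\mc{Q}}_r\Lm^0=\mc{Q}_r^-\Lm^0=\mc{Q}_r$; for $k=n$ the identical invariant count applies, the invariance \eqref{eq:deg-3} now being read on the $(n-1)$-forms produced by $\kappa$ before $\dd$ is applied. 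In every case the dimension equals $\binom{n}{k}(r+1)^n$.
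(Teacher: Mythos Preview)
Your proof is correct and follows essentially the same route as the paper: both arguments reduce to showing $\mc{Q}_r^-\Lm^k \cap \dd\kappa\mc{B}_r\Lm^k = \{0\}$ via the invariant $q(m)=\ncdeg_{r+1}(m)+\cdeg_r(m)$ from \eqref{eq:deg-3}, which separates $\mc{Q}_r^-\Lm^k$ (where $q=0$) from anything produced by $\dd\kappa$ acting on $\mc{B}_r\Lm^k$ (where $q\ge 1$). One small phrasing point: $\dd\kappa\mc{B}_r\Lm^k$ is not itself a monomial-spanned space, so rather than saying it is ``spanned by'' the $q\ge 1$ monomials you should say it is \emph{contained in} their span --- but that containment is all the intersection argument needs.
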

\begin{proof}
By Lemma~\ref{lemma:Blm}~(b), the spaces $\dd\kappa \mc{B}_r \Lm^k$ and  $\mc{B}_r \Lm^k$ have the same dimension,
so it suffices to show that $\mc{Q}_r^- \Lm^k \cap \dd \kappa \mc{B}_r \Lm^k = \{ 0 \}$.
Suppose that $0 \not = u \in \mc{B}_r \Lm^k$ and $\dd \kappa u \in \mc{Q}_r^- \Lm^k$. Every form monomial $m$ of $\dd \kappa u$ satisfies $\cdeg_r (m) \ge 1$ by \eqref{eq:Br} and Lemma~\ref{lemma:deg-invariance} because $\ncdeg_{r+1} (m) = 0$. However, it is a contradiction to the characterization of $\mc{Q}_r^- \Lm^k$ in \eqref{eq:Qr-minus}, so $u = 0$.
\end{proof}

\subsection{Degrees of freedom and unisolvence of $\tilde{\mc{Q}}_r \Lm^k$}

In this subsection we define the degrees of freedom of $\tilde{\mc{Q}}_r \Lm^k$ and prove the unisolvency for the degrees of freedom.


For the degrees of freedom we define two polynomial spaces for $\s \in \Sigma_k(f)$ for a cube $f$ included in an $m$-dimensional hyperspace by
\algns{
\mc{Q}_{r,\s} (f) = \bigotimes_{i \in \jump{\s}} \mc{P}_r(x_i) ,\qquad \mc{Q}_{r,\s^*} (f) = \bigotimes_{i \in \jump{\s^*}} \mc{P}_r(x_i) 
}
where $\mc{P}_r(x_i)$ is the space of polynomials of $x_i$ with degree less than or equal to $r$. 
The degrees of freedom of $\tilde{\mc{Q}}_r \Lm^k (\hat{T})$ is 
\algn{ \label{eq:DOF}
u \mapsto \int_f \tr_f u \wedge v, \quad f \in \lap_l(\hat{T}), k \le l \le n ,\quad  v \in \sum_{\tau \in \Sigma_{l-k}(f)} (\mc{Q}_{r-2, \tau} \otimes \mc{Q}_{r, \tau^*})(f) \dd x_{\tau} 
}
where $(\mc{Q}_{r-2, \tau} \otimes \mc{Q}_{r, \tau^*})(f)  = \mc{Q}_{r-2, \tau} (f) \otimes \mc{Q}_{r, \tau^*}(f)$. 

\begin{theorem}
The number of degrees of freedom given by \eqref{eq:DOF} is 
\algns{
\pmat{n \\ k} (r+1)^n .
}
\end{theorem}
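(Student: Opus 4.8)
The plan is to compute the total number of functionals in \eqref{eq:DOF} as a dimension count, organized face by face, and then to collapse the resulting sum using the binomial theorem.

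First I would fix a face $f \in \Delta_l(\hat T)$ of dimension $l$, with $k \le l \le n$, whose associated index set $\mathcal I$ has $l$ elements. The test forms $v$ attached to $f$ range over $\sum_{\tau \in \Sigma_{l-k}(f)} (\mc Q_{r-2,\tau}\otimes \mc Q_{r,\tau^*})(f)\,\dd x_\tau$, and since the basis forms $\dd x_\tau$ for distinct $\tau \in \Sigma_{l-k}(f)$ are linearly independent, this sum is direct. Its dimension is therefore $\sum_{\tau \in \Sigma_{l-k}(f)} \dim \mc Q_{r-2,\tau}(f)\cdot \dim \mc Q_{r,\tau^*}(f)$. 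For each such $\tau$ the range of $\tau$ consists of $l-k$ indices and that of $\tau^*$ of $k$ indices, so from $\dim \mc P_{r-2}(x_i) = r-1$ and $\dim \mc P_r(x_i) = r+1$ I obtain $\dim \mc Q_{r-2,\tau}(f) = (r-1)^{l-k}$ and $\dim \mc Q_{r,\tau^*}(f) = (r+1)^k$. As $|\Sigma_{l-k}(f)| = \binom{l}{k}$, the number of functionals associated to a single $l$-face equals $\binom{l}{k}(r-1)^{l-k}(r+1)^k$.

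Next I would count the faces: the number of $l$-dimensional faces of $\hat T = [-1,1]^n$ is $\binom{n}{l}2^{n-l}$, obtained by choosing the $l$ free coordinate directions and assigning a value $\pm 1$ to each of the remaining $n-l$ fixed coordinates. Multiplying and summing over $k \le l \le n$ gives the total
\[
N = \sum_{l=k}^{n} \binom{n}{l}\,2^{n-l}\,\binom{l}{k}\,(r-1)^{l-k}\,(r+1)^{k}.
\]
To simplify $N$ I would use the subset-of-a-subset identity $\binom{n}{l}\binom{l}{k} = \binom{n}{k}\binom{n-k}{l-k}$ and substitute $j = l-k$, which recasts the sum as
\begin{align*}
N &= \binom{n}{k}(r+1)^{k}\sum_{j=0}^{n-k}\binom{n-k}{j}(r-1)^{j}\,2^{(n-k)-j} \\
&= \binom{n}{k}(r+1)^{k}\bigl((r-1)+2\bigr)^{n-k} = \binom{n}{k}(r+1)^{n},
\end{align*}
where the middle equality is the binomial theorem.

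The computation is routine, so the only real care is in the per-face bookkeeping: one must verify that the sum over $\tau$ is genuinely direct and that the exponents $l-k$ and $k$ land in the correct factors, and one should confirm that the single closed form is valid in the degenerate cases $l=k$ (where $\tau$ is empty and contributes the factor $(r-1)^0 = 1$) and $r=1$ (where $\mc P_{r-2} = \{0\}$ forces $\dim \mc Q_{r-2,\tau}(f) = 0$ for $l > k$, so only the $l=k$ faces contribute and $N = \binom{n}{k}2^n$, exactly as the formula predicts).
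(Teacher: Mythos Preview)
Your proof is correct and follows essentially the same route as the paper: count the functionals on each $l$-face as $\binom{l}{k}(r-1)^{l-k}(r+1)^k$, multiply by the number $\binom{n}{l}2^{n-l}$ of $l$-faces, and collapse the sum via $\binom{n}{l}\binom{l}{k}=\binom{n}{k}\binom{n-k}{l-k}$ and the binomial theorem. The paper performs the same manipulation with the binomials written in the equivalent forms $\binom{n}{n-l}$ and $\binom{l}{l-k}$ and the intermediate step displayed in factorial notation rather than citing the subset-of-a-subset identity by name.
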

\begin{proof}
For $\tau \in \Sigma_{l-k}(f)$ with $f \in \lap_l(\hat{T})$, $l \ge k$, 
\algns{
\dim \mc{Q}_{r-2, \tau} (f) = (r-1)^{l-k}, \qquad  \dim \mc{Q}_{r, \tau^*} (f) = (r+1)^{k}.
}
We can easily check that 
\algns{
| \lap_l (\hat{T}) | = \pmat{n \\ n-l} 2^{n-l}, \qquad | \Sigma_{l-k}(f) | = \pmat{l \\ l-k} \text{ for } f \in \lap_l(\hat{T}).
}
Therefore the number of degrees of freedom given by \eqref{eq:DOF} is 
\algns{
&\sum_{k \le l \le n} \pmat{n \\ n-l} 2^{n-l} \pmat{l \\ l-k} (r-1)^{l-k} (r+1)^k  \\
&= \frac{n ! (r+1)^k}{k! (n-k)!} \sum_{k \le l \le n} \frac{(n-k)!}{(n-l)! (l-k)!} 2^{n-l} (r-1)^{l-k} \\
&= \pmat{n \\ k} (r+1)^k \sum_{0 \le i \le n-k} \frac{(n-k)!}{(n-k-i)! i!} 2^{n-k-i} (r-1)^i \\
&= \pmat{n \\ k} (r+1)^n ,
}
so the proof is complete.
\end{proof}

The following result will be useful to reduce unisolvency proof.

\begin{theorem}[trace property]\label{thm:trace-property}
Let $f$ be a hyperspace in $\R^n$ determined by fixing one coordinate $x_i$. Then 
\algns{
\tr_f \tilde{\mc{Q}}_r \Lm^k \subset \tilde{\mc{Q}}_r \Lm^k (f) .
}
\end{theorem}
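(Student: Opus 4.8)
The plan is to exploit the defining decomposition \eqref{eq:Qtilde-shape}: any $u \in \tilde{\mc{Q}}_r \Lm^k$ can be written as $u = u^- + \dd \kappa u^+$ with $u^- \in \mc{Q}_r^- \Lm^k$ and $u^+ \in \mc{B}_r \Lm^k$, so by linearity of $\tr_f$ it is enough to place $\tr_f u^-$ and $\tr_f \dd \kappa u^+$ in $\tilde{\mc{Q}}_r \Lm^k(f)$. Writing $f = \{ x \in \R^n : x_i = c \}$, I would first record the elementary action of the trace on a form monomial: $\tr_f (x^\alpha \dd x_\s)$ vanishes when $i \in \jump{\s}$ (since $\dd x_i$ pulls back to $0$ on $f$), and equals $(x^\alpha|_{x_i = c})\, \dd x_\s$ when $i \notin \jump{\s}$. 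In the surviving case the conforming index set $\jump{\s}$ and the degrees $\alpha_j$ for $j \in \jump{\s}$ are untouched, so the $\cdeg_r$-characterizations \eqref{eq:Qr-minus} and \eqref{eq:Br} are preserved termwise. Hence $\tr_f \mc{Q}_r^- \Lm^k \subset \mc{Q}_r^- \Lm^k(f)$ and $\tr_f \mc{B}_r \Lm^k \subset \mc{B}_r \Lm^k(f)$; in particular $\tr_f u^- \in \mc{Q}_r^- \Lm^k(f) \subset \tilde{\mc{Q}}_r \Lm^k(f)$.

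For the second summand I would commute the trace past the exterior derivative using the commutativity of pullback with $\dd$, giving $\tr_f \dd \kappa u^+ = \dd\, \tr_f \kappa u^+$, and then apply the Koszul--trace identity \eqref{eq:kappa-tr} to split $\tr_f \kappa u^+ = \kappa_f \tr_f u^+ + \tr_f(x^f \lrcorner u^+)$. This yields the key splitting
\[
\tr_f \dd \kappa u^+ = \dd \kappa_f \tr_f u^+ + \dd\, \tr_f(x^f \lrcorner u^+),
\]
after which it remains to absorb each term into $\tilde{\mc{Q}}_r \Lm^k(f)$.

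The first term is immediate from the previous paragraph: since $\tr_f u^+ \in \mc{B}_r \Lm^k(f)$, we have $\dd \kappa_f \tr_f u^+ \in \dd \kappa_f \mc{B}_r \Lm^k(f) \subset \tilde{\mc{Q}}_r \Lm^k(f)$ directly from the definition \eqref{eq:Qtilde-shape} read on the face $f$. For the second term, contraction against the constant vector $x^f$ raises no polynomial degree, so $x^f \lrcorner u^+ \in \mc{Q}_r \Lm^{k-1}$ and therefore $\tr_f(x^f \lrcorner u^+) \in \mc{Q}_r \Lm^{k-1}(f)$; applying the inclusion \eqref{eq:inclusion-3} on $f$ then gives $\dd\, \tr_f(x^f \lrcorner u^+) \in \dd \mc{Q}_r \Lm^{k-1}(f) \subset \tilde{\mc{Q}}_r \Lm^k(f)$. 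Combining the three contributions shows $\tr_f u \in \tilde{\mc{Q}}_r \Lm^k(f)$.

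The main obstacle, and the only genuinely structural point, is the second summand $\dd \kappa u^+$: its trace does not restrict to a Koszul-type form on $f$ on the nose, because $\kappa$ contracts against the full position vector $x$ while $\kappa_f$ contracts only against its tangential part $x - x^f$. The identity \eqref{eq:kappa-tr} is exactly what measures this discrepancy, and the correction term $\tr_f(x^f \lrcorner u^+)$ it produces must be absorbed through \eqref{eq:inclusion-3} rather than through the Koszul summand of $\tilde{\mc{Q}}_r \Lm^k(f)$. The remaining verifications — that $\tr_f$ carries $\mc{Q}_r^- \Lm^k$ and $\mc{B}_r \Lm^k$ into their face analogues — are routine bookkeeping on the $\cdeg_r$-degrees, since restriction to $\{x_i = c\}$ only deletes monomials and preserves the conforming indices and their degrees of those that survive.
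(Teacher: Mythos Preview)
Your proof is correct and follows essentially the same route as the paper's: both commute $\tr_f$ past $\dd$, apply the Koszul--trace identity \eqref{eq:kappa-tr} to split $\tr_f \dd\kappa u^+$, and then absorb each piece into $\tilde{\mc{Q}}_r\Lm^k(f)$ via \eqref{eq:inclusion-3}. The only cosmetic difference is that the paper treats form monomials and splits into the cases $i\in\jump{\s}$ and $i\notin\jump{\s}$ (in each of which one of the two summands vanishes), invoking \eqref{eq:inclusion-4} for the $\dd\kappa_f$-term, whereas you avoid the case split by recording the slightly sharper inclusion $\tr_f \mc{B}_r\Lm^k \subset \mc{B}_r\Lm^k(f)$ and reading the definition \eqref{eq:Qtilde-shape} directly on $f$.
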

\begin{proof} 
Without loss of generality we assume that $f = \{x \in \R^n \,:\, x_n = c \}$ for some constant $c$. 
It is easy to check by definition that $\tr_f \mc{Q}_r^- \Lm^k \subset \mc{Q}_r^- \Lm^k(f)$, so 
it is enough to show that $\tr_f \dd \kappa \mc{B}_r \Lm^k \subset \tilde{\mc{Q}}_r \Lm^k (f)$.
We will show $\tr_f \dd \kappa ( x^\alpha \dd x_\s) \in \tilde{\mc{Q}}_r \Lm^k (f)$ for all $x^\alpha \dd x_\s \in \mc{B}_r \Lm^k$ below.
By \eqref{eq:kappa-tr} and the commutativity of $\tr_f$ and $\dd$, 
\algns{
\tr_f \dd \kappa ( x^\alpha \dd x_\s) = \dd \kappa_f \tr_f ( x^\alpha \dd x_\s) + \dd \tr_f (x^f \lrcorner ( x^\alpha \dd x_\s)) 
}
where $x^f$ is the vector field $(0, \ldots, 0, c)$ on $\R^n$. 

If $n \in \jump{\s}$, then $\tr_f (x^\alpha \dd x_\s) = 0$ and
$\tr_f (x^f \lrcorner (x^\alpha \dd x_\s) ) = (c x^\alpha |_{x_n = c}) \dd x_{\tilde{\s}} \in \mc{Q}_r \Lm^{k-1}(f)$ where $\dd x_{\tilde{\s}}$ is defined by $\dd x_\s = \dd x_{\tilde{\s}} \wedge \dd x_n$. From \eqref{eq:inclusion-3} 
we can conclude that $\tr_f \dd \kappa ( x^\alpha \dd x_\s) \in \tilde{\mc{Q}}_r \Lm^k(f)$. 
If $n \not \in \jump{\s}$, then $x^f \lrcorner (x^\alpha \dd x_\s) = 0$. Since $\tr_f (x^\alpha \dd x_\s) = x^\alpha|_{x_n = c} \dd x_\s \in \mc{Q}_r \Lm^k(f)$, the conclusion follows from \eqref{eq:inclusion-4}.
\end{proof}

Before we start the unisolvence proof we need a lemma and auxiliary definitions. 

\begin{lemma} \label{lemma:eps-identity}
Suppose that $\s, \tilde{\s} \in \Sigma_k$, $1 \le k \le n-1$, $\s \not = \tilde{\s}$ satisfy $\s + {i} = \tilde{\s} + \tilde{i}$ for some $i \in \jump{\tilde{\s}}$, $\tilde{i}\in \jump{\s}$. Then 
\algns{
\e(i, {\s} ) \e(i, \tilde{\s}-i) - \e(\tilde{i}, \tilde{\s} ) \e(\tilde{i}, \s - \tilde{i}) \not{=} 0. 
}
\end{lemma}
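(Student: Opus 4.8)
The plan is to reduce the claimed inequality to a simple parity count. First I would unwind the definitions. Since $\s + i = \tilde{\s} + \tilde{i}$ with $i \in \jump{\tilde{\s}}$ and $\tilde{i} \in \jump{\s}$, the very meaning of $\s + i$ and $\tilde{\s} + \tilde{i}$ forces $i \notin \jump{\s}$ and $\tilde{i} \notin \jump{\tilde{\s}}$. Writing $S = \jump{\s}$, $\tilde{S} = \jump{\tilde{\s}}$ and $U = \jump{\s+i} = \jump{\tilde{\s}+\tilde{i}}$, this gives $|U| = k+1$, $S = U \setminus \{i\}$, $\tilde{S} = U \setminus \{\tilde{i}\}$, and $i \neq \tilde{i}$ (if $i = \tilde{i}$ then $\tilde{i} \in \jump{\s}$ would give $i \in \jump{\s}$, a contradiction). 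Here $i, \tilde{i} \in U$. I then introduce the counting function $p(x, T) = |\{\, j \in T : j < x \,\}|$, so that each of the four $\e$-factors becomes the sign of such a count.

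Next I would rewrite the four factors. Directly from the definition of $\e(\cdot,\cdot)$ one has $\e(i,\s) = (-1)^{p(i,S)}$ and $\e(\tilde{i},\tilde{\s}) = (-1)^{p(\tilde{i},\tilde{S})}$. For the other two, deleting $i$ from $\tilde{\s}$ does not change the number of its indices lying below $i$, so $\e(i,\tilde{\s}-i) = (-1)^{p(i,\tilde{S})}$, and likewise $\e(\tilde{i},\s-\tilde{i}) = (-1)^{p(\tilde{i},S)}$. Hence the quantity in the lemma equals
\[
(-1)^{p(i,S)+p(i,\tilde{S})} - (-1)^{p(\tilde{i},\tilde{S})+p(\tilde{i},S)} ,
\]
which is nonzero exactly when the two exponents have opposite parity, i.e. when $p(i,S)+p(i,\tilde{S})+p(\tilde{i},\tilde{S})+p(\tilde{i},S)$ is odd.

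Finally I would evaluate this parity using $S = U \setminus \{i\}$ and $\tilde{S} = U \setminus \{\tilde{i}\}$. Removing $i$ from $U$ leaves the count of elements below $i$ unchanged, so $p(i,S) = p(i,U)$, and symmetrically $p(\tilde{i},\tilde{S}) = p(\tilde{i},U)$. On the other hand $p(i,\tilde{S}) = p(i,U)$ when $\tilde{i} > i$ and $p(i,U)-1$ when $\tilde{i} < i$, while $p(\tilde{i},S) = p(\tilde{i},U)$ when $i > \tilde{i}$ and $p(\tilde{i},U)-1$ when $i < \tilde{i}$. Because $i \neq \tilde{i}$, exactly one of the strict inequalities $\tilde{i} < i$, $i < \tilde{i}$ holds, so precisely one of these two decrements occurs; summing the four counts yields $2p(i,U) + 2p(\tilde{i},U) - 1$, which is odd. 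The two exponents thus differ in parity and the difference is $\pm 2 \neq 0$. I do not expect a genuine obstacle here: the whole argument is elementary sign bookkeeping, and its only delicate point is tracking which of the two removals produces the lone $-1$. That single decrement, forced by $i \neq \tilde{i}$, is exactly what guarantees the odd parity.
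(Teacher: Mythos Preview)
Your proof is correct and follows essentially the same sign-bookkeeping approach as the paper. The paper splits into the cases $i>\tilde{i}$ and $i<\tilde{i}$ and computes each of the four $\e$-factors explicitly via counts $a,b$ in $\jump{\s}\cap\jump{\tilde{\s}}$; your version is a slightly more streamlined packaging of the same computation, using the counting function $p(x,T)$ relative to $U=\jump{\s+i}$ and observing that exactly one of the two possible $-1$ decrements occurs, which forces the odd parity.
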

\begin{proof}
We first prove it under the assumption $i > \tilde{i}$. Let 
\algns{
a &= | \{ l \,:\, l < \tilde{i} , l \in \jump{\s} \cap \jump{\tilde{\s}} \} |, \\
b &= | \{ l \,:\, \tilde{i} < l < i, l \in \jump{\s} \cap \jump{\tilde{\s}} \} |, \\
c &= | \{ l \,:\, i < l , l \in \jump{\s} \cap \jump{\tilde{\s}} \} |. 
}
Then one can check 
\gats{
\e(i, \s) = (-1)^{a+b+1}, \quad \e(\tilde{i}, \tilde{\s}) = (-1)^{a}, \\
\e(i, \tilde{\s} - i) = (-1)^{a+b}, \quad \e(\tilde{i}, \s - \tilde{i}) = (-1)^a, 
}
so the assertion follows. If $i < \tilde{i}$, then we set
\algns{
a &= | \{ l \,:\, l < i , l \in \jump{\s} \cap \jump{\tilde{\s}} \} |, \\
b &= | \{ l \,:\, i < l < \tilde{i}, l \in \jump{\s} \cap \jump{\tilde{\s}} \} |, \\
c &= | \{ l \,:\, \tilde{i} < l , l \in \jump{\s} \cap \jump{\tilde{\s}} \} |,
}
and one can check that
\gats{
\e(i, \s) = (-1)^{a}, \quad \e(\tilde{i}, \tilde{\s}) = (-1)^{a+b+1}, \\
\e(i, \tilde{\s} - i) = (-1)^{a}, \quad \e(\tilde{i}, \s - \tilde{i}) = (-1)^{a+b}.
}
The proof is complete.
\end{proof}

We define $\mc{D}_{r,l} \Lm^k$ as 
\algns{
\mc{D}_{r,l} := \{ u \in \mc{P} \Lm^k \,:\, \ncdeg_{r+1} (m) + \cdeg_r (m) = l \text{ for every form monomial } m \text{ in } u \} .
}
By Lemma~\ref{lemma:deg-invariance} $\dd \kappa$ maps $\mc{D}_{r,l} \Lm^k$ into itself. 
Considering the decomposition of $\mc{B}_r \Lm^k$ 
\algns{
\mc{B}_r \Lm^k = \bigoplus_{1 \le l \le k} \bigoplus_{r \le s \le nr} \mc{H}_s \Lm^k \cap \mc{D}_{r,l} \Lm^k \cap \mc{B}_r \Lm^k 
}
we have a decomposition of $\dd\kappa \mc{B}_r \Lm^k$
\algn{ \label{eq:dkappa-decomp}
\dd \kappa \mc{B}_r \Lm^k = \bigoplus_{1 \le l \le k} \bigoplus_{r \le s \le nr} \mc{H}_s \Lm^k \cap \mc{D}_{r,l} \Lm^k \cap \dd \kappa \mc{B}_r \Lm^k .
}

Let us recall the degrees of freedom of $\mc{Q}_r^- \Lm^k(\hat{T})$ with vanishing trace in \cite{Arnold-Boffi-Bonizzoni-2015}. If $u \in \mc{Q}_r^- \Lm^k(\hat{T})$ and $\tr_f u = 0$ for all $f \in \lap_{n-1}(\hat{T})$, then 
\algn{ \label{eq:Qr-minus-DOF}
\int_{\hat{T}} u \wedge v = 0, \qquad \forall v \in \mc{Q}_{r-1}^- \Lm^{n-k}(\hat{T}) 
}
implies that $u=0$. 

We are now ready to prove unisolvency of $\tilde{\mc{Q}}_r \Lm^k (\hat{T})$, $r \ge 1$, with the degrees of freedom \eqref{eq:DOF}. 

\begin{prop}[unisolvence with vanishing trace assumption] \label{thm:reduced-unisolvence}
Suppose that $u \in \tilde{\mc{Q}}_r \Lm^k(\hat{T})$, $r \ge 1$, and $\tr_f u = 0$ for all $f \in \lap_{n-1}(\hat{T})$.
If 
\algn{ \label{eq:reduced-DOF}
\int_{\hat{T}} u \wedge v = 0 \qquad \forall v \in \sum_{\tau \in \Sigma_{n-k}(\hat{T})} (\mc{Q}_{r-2, \tau} \otimes \mc{Q}_{r, \tau^*}) (\hat{T}) \dd x_{\tau}, 
}
then $u = 0$. Here we accept $\mc{Q}_{-1} = \emptyset$ for convention.
\end{prop}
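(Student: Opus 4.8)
The plan is to split $u$ according to the defining sum \eqref{eq:Qtilde-shape} and to reduce the single vector condition \eqref{eq:reduced-DOF} to a family of scalar orthogonalities, one for each $\s \in \Sigma_k$. Using \eqref{eq:Qr-direct-sum} I would write $u = u^- + \dd\kappa u^+$ with $u^- \in \mc{Q}_r^- \Lm^k$ and $u^+ \in \mc{B}_r \Lm^k$, and let $P_\s$ denote the coefficient of $\dd x_\s$ in $u$. For a test monomial $x^\beta \dd x_\tau$ with $\tau \in \Sigma_{n-k}(\hat{T})$ one has $\dd x_\s \wedge \dd x_\tau \neq 0$ only when $\tau = \s^*$, and then $\int_{\hat{T}} x^{\gamma+\beta} \dd x_1 \wedge \cdots \wedge \dd x_n$ is nonzero over $\hat{T} = [-1,1]^n$ exactly when every component of $\gamma+\beta$ is even. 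Hence \eqref{eq:reduced-DOF} is equivalent to saying that, for every $\s$, the polynomial $P_\s$ is $L^2(\hat{T})$-orthogonal to the tensor box $\mc{Q}_{r-2,\s^*} \otimes \mc{Q}_{r,\s}$; in a tensor Legendre basis this annihilates precisely the low modes of $P_\s$ (degree $\le r-2$ in the nonconforming slots $\jump{\s^*}$ and $\le r$ in the conforming slots $\jump{\s}$).

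First I would dispose of the enriched part $u^+$. The crucial structural fact is the grading by $l = \ncdeg_{r+1} + \cdeg_r$, which $\dd\kappa$ preserves by \eqref{eq:deg-3}: by the characterization \eqref{eq:Qr-minus} the piece $u^-$ lies in the block $l=0$, while by \eqref{eq:dkappa-decomp} the part $\dd\kappa u^+$ lies in $\bigoplus_{l \ge 1} \mc{D}_{r,l}\Lm^k$. In particular $u^-$ has all coordinate degrees $\le r$, so $\dd\kappa u^+$ is the \emph{only} source of monomials of $u$ carrying a nonconforming index of degree $r+1$; Lemma~\ref{lemma:Blm}(a) guarantees such monomials genuinely occur for each nonzero homogeneous block of $u^+$, Lemma~\ref{lemma:Blm}(b) gives injectivity of $\dd\kappa$ on $\mc{B}_r\Lm^k$, and Lemma~\ref{lemma:eps-identity} ensures that two distinct form monomials of $u^+$ cannot produce the same top Koszul monomial with cancelling signs. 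Running this block by block (largest $l$ first), the contribution of $u^+$ to the orthogonality conditions is an injective linear image of $u^+$, which I would use to force $u^+ = 0$. The vanishing-trace hypothesis enters here, together with the rigidity of $\mc{Q}_r^-\Lm^k$ and $\dd\kappa\mc{B}_r\Lm^k$, to pin down the high Legendre modes (degrees $r-1, r, r+1$ in the nonconforming slots) that the box orthogonality does not see directly.

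Once $u^+ = 0$ we have $u = u^- \in \mc{Q}_r^-\Lm^k$ and $\tr_f u^- = \tr_f u = 0$ for all $f \in \lap_{n-1}(\hat{T})$. Since $\mc{Q}_{r-1,\tau^*} \subset \mc{Q}_{r,\tau^*}$ for every $\tau$, the test space in \eqref{eq:reduced-DOF} contains $\mc{Q}_{r-1}^-\Lm^{n-k}(\hat{T})$, so the hypothesis yields $\LRa{u^- \wedge v} = \int_{\hat{T}} u^- \wedge v = 0$ for all $v \in \mc{Q}_{r-1}^-\Lm^{n-k}(\hat{T})$. The classical unisolvence \eqref{eq:Qr-minus-DOF} of $\mc{Q}_r^-\Lm^k(\hat{T})$ then gives $u^- = 0$, completing the proof.

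The hard part will be the second paragraph. The coefficient $P_\s$ receives interfering contributions from both $u^-$ and $\dd\kappa u^+$, and the box orthogonality only annihilates the low Legendre modes, whereas the genuinely new content of $\dd\kappa u^+$ lives in the high modes that the box does not control; moreover a single test exponent $\beta_p \le r-2$ pairs with a whole parity class of coordinate degrees, so degree $r+1$ cannot simply be read off monomial by monomial. Disentangling these is exactly what the quantities $\cdeg_s, \ncdeg_s$, their invariance under the building blocks of $\dd$ and $\kappa$ in Lemma~\ref{lemma:deg-invariance}, the induced grading \eqref{eq:dkappa-decomp}, and the sign non-vanishing of Lemma~\ref{lemma:eps-identity} are designed to accomplish; verifying that the associated linear map on $u^+$ is injective, uniformly across the blocks, is the technical heart of the argument.
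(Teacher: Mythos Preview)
Your high-level strategy matches the paper's: grade by $l=\ncdeg_{r+1}+\cdeg_r$, kill the enriched part $\dd\kappa u^+$ block by block from the top, and then fall back on the classical unisolvence \eqref{eq:Qr-minus-DOF} for $\mc{Q}_r^-\Lm^k$. The last step is fine exactly as you wrote it.

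The gap is in the mechanism you propose for annihilating the top block. You suggest that the orthogonality \eqref{eq:reduced-DOF}, together with injectivity of $\dd\kappa$ on $\mc{B}_r\Lm^k$ and Lemma~\ref{lemma:eps-identity}, gives an injective linear map on $u^+$ and hence $u^+=0$. That is not how the argument closes: the test space in \eqref{eq:reduced-DOF} has degree at most $r-2$ in the nonconforming slots, so it never ``sees'' the degree-$(r{+}1)$ content of $\dd\kappa u^+$ directly, and Lemma~\ref{lemma:eps-identity} is not a non-cancellation statement for Koszul monomials. What actually happens is this. The vanishing-trace hypothesis forces $u_\s = b_{\s^*}\tilde u_\s$, and then \eqref{eq:reduced-DOF} (which is a full tensor box in the conforming variables) reduces $\tilde u_\s$ to a combination of weighted Legendre top modes $L_{r-1}^w(x_i)$ in the nonconforming slots, yielding the precise form \eqref{eq:u-reduced}. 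From that point on the orthogonality is \emph{used up}; the remaining constraints on the top homogeneous block $u_{l_0,s_0}$ come from the algebraic structure of $\dd\kappa\mc{B}_r\Lm^k$ itself: (i) $\dd u_{l_0,s_0}=0$ because $u_{l_0,s_0}\in\dd\kappa\mc{B}_r\Lm^k$, and (ii) $\kappa u_{l_0,s_0}\in\kappa\mc{B}_r\Lm^k$ has $\ncdeg_{r+1}\le 1$ by Lemma~\ref{lemma:deg-invariance}. Writing the coefficients as in \eqref{eq:u-max}--\eqref{eq:q_si}, condition (ii) produces the equation \eqref{eq:q-eq1} and condition (i) produces \eqref{eq:q-eq2}; the pairing between the two unknowns is governed by \eqref{eq:comp-quad}, and Lemma~\ref{lemma:eps-identity} is exactly the statement that the resulting $2\times 2$ system is nonsingular, forcing $q_{\s,i,\tau}=0$. (For $l_0=1$ only (i) is needed.) Without identifying this pair of constraints coming from $\dd$ and $\kappa$ on the top block, the ``injective linear image'' you invoke does not exist, and the plan as written does not go through.
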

\begin{proof}
If $k=0$ or $k=n$, then $\tilde{\mc{Q}}_r \Lm^k (\hat{T}) = \mc{Q}_r \Lm^k (\hat{T})$ and \eqref{eq:reduced-DOF} is a standard set of degrees of freedom for the shape functions with vanishing trace, so there is nothing to prove.

Assume that $0 < k < n$, and let $u = \sum_{\s \in \Sigma_k(\hat{T})} u_\s \dd x_\s \in \tilde{\mc{Q}}_r \Lm^k(\hat{T})$ be a shape function with vanishing trace.
From the vanishing trace assumption $\tr_f u = 0$ for all $f \in \lap_{n-1}(\hat{T})$, $u_\s$ vanishes on all faces $f \in \lap_{n-1}(\hat{T})$ determined by $x_i = \pm 1$ for any $i \in \jump{\s}$. Therefore $u_\s = b_{\s^*} \tilde{u}_{\s}$ with $b_{\s^*} := \prod_{l \in \jump{\s^*}} (1-x_l^2)$
for all $\s \in \Sigma_k(\hat{T})$. In the degree properties \eqref{eq:deg-1}, \eqref{eq:deg-2}, \eqref{eq:deg-3}, 
the coefficients of all form monomials in $\tilde{\mc{Q}}_r \Lm^k(\hat{T})$ have at most one variable of degree $(r+1)$. From this observation and \eqref{eq:reduced-DOF}, $u$ has a form
\algn{ \label{eq:u-reduced}
u = \sum_{\s \in \Sigma_k(\hat{T})} b_{\s^*} \LRp{\sum_{i \in \jump{\s^*}} L_{r-1}^w(x_i) p_{\s,i} } \dd x_\s
} 
where $L_s^w(t)$ is the monic Legendre polynomial of degree $s$ on $[-1, 1]$ with weight $(1-t^2)$, 
and $p_{\s,i}$ is a polynomial in $(\mc{Q}_{r,\s}  \otimes \mc{Q}_{r-2, \s^*}) (\hat{T})$ which is independent of $x_i$.
Recall the decomposition \eqref{eq:dkappa-decomp} and let 
\algns{
u = u_0 + \sum_{l,s} u_{l,s} \in \mc{Q}_r^- \Lm^k (\hat{T}) \oplus \bigoplus_{1 \le l \le k} \bigoplus_{r\le s \le nr} \mc{H}_s \Lm^k (\hat{T}) \cap \mc{D}_{r,l} \Lm^k (\hat{T}) \cap \dd \kappa \mc{B}_r \Lm^k (\hat{T}).
}
Let $l_0$ be the largest index $l$ such that $u_{l,s} \not = 0$ for some $s$, and let $s_0$ be the largest index $s$ such that $u_{l_0, s} \not = 0$. 
In the proof below, we will show $u_{l_0, s_0} = 0$. By induction, this implies that $u = u_0 \in \mc{Q}_r^- \Lm^k(\hat{T})$, and therefore $u=0$ by \eqref{eq:reduced-DOF}, \eqref{eq:Qr-minus-DOF}, and the inclusion 
\algns{
\mc{Q}_{r-1}^- \Lm^{n-k}(\hat{T}) \subset \sum_{\tau \in \Sigma_{n-k}(\hat{T})} (\mc{Q}_{r-2, \tau} \otimes \mc{Q}_{r, \tau^*}) (\hat{T}) \dd x_{\tau} .
}

We now start the proof of $u_{l_0, s_0} = 0$. In the form \eqref{eq:u-reduced}, if $p_0$ is a monomial of $p_{\s,i}$ for fixed $\s$, then $\cdeg_r (p_0 \dd x_\s) < l_0$ because any form monomial $m$ containing the factors $x_i^{r+1}$ and $p_0$ in its polynomial coefficient,
which comes from the expansion of the coefficient of $\dd x_{\s}$ in \eqref{eq:u-reduced}, 
satisfies both of $\ncdeg_{r+1}(m) = 1$ and $\ncdeg_{r+1}(m) + \cdeg_r (m) \le l_0$.
Note that it also implies that $u_{l_0, s_0}$ does not have any form monomials with conforming $r$-degree $l_0$. 

Let $q_{\s,i}$ be the homogeneous polynomial of $p_{\s,i}$ for each $\s$ and $i \in \jump{\s^*}$ which contribute to consist $u_{l_0, s_0}$, i.e., 
\algn{ \label{eq:u-max}
u_{l_0, s_0} = \sum_{\s \in \Sigma_k(\hat{T})} \prod_{l \in \jump{\s^*}} x_l^2 \LRp{\sum_{i \in \jump{\s^*}} x_i^{r-1} q_{\s,i} } \dd x_\s
}
and $\cdeg_r (q_{\s,i} \dd x_\s ) = l_0 - 1$ by the previous observation. Since $u_{l_0, s_0} \in \dd \kappa \mc{B}_r \Lm^k (\hat{T})$ by definition, $\dd u_{l_0, s_0} = 0$. 

We now show $u_{l_0, s_0} = 0$ for $l_0 = 1$. If $l_0 = 1$, then $q_{\s,i} \in (\mc{Q}_{r-1, \s} \otimes \mc{Q}_{r-2, \s^*}) (\hat{T})$ because $\cdeg_r (q_{\s,i} \dd x_\s) < l_0$. We claim that $q_{\s,i} = 0$ for all $\s$ and $i$. To prove it we show that the form monomials in $\dd u_{l_0, s_0}$ with conforming $r$-degree 1, are all distinct.
Note that such form monomials are from
\algn{ \label{eq:tmp-1}
(r+1) \prod_{l \in \jump{\s^*}} x_l^2 x_i^{r-2} q_{\s,i} \dd x_i \wedge \dd x_\s
}
for $i \in \jump{\s^*}$. For fixed $\s$ the form monomials in this formula are all distinct for different $i$'s. Further, if we assume that there is $\tilde{\s} \in \Sigma_k(\hat{T})$ with $\tilde{\s} \not = \s$ and $\tilde{i} \in \jump{\tilde{\s}^*}$ such that the form monomials in 
\algns{
(r+1) \prod_{l \in \jump{\tilde{\s}^*}} x_l^2 x_i^{r-2} q_{\tilde{\s},\tilde{i}} \dd x_{\tilde{i}} \wedge \dd x_{\tilde{\s}}
}
are not linearly independent with the ones in \eqref{eq:tmp-1}, 
then it leads to a contradiction because $i$ and $\tilde{i}$ are the only conforming indices of degree $r$ in these form monomials, and therefore $i=\tilde{i}$, $\s = \tilde{\s}$. 
Thus, all form monomials in $\dd u_{l_0, s_0}$ with conforming $r$-degree 1 are distinct and have the form \eqref{eq:tmp-1}. From $\dd u_{l_0, s_0} = 0$, $q_{\s, i} = 0$ for all $\s \in \Sigma_k (\hat{T})$ and $i \in \jump{\s^*}$, and therefore $u_{l_0, s_0} = 0$ by \eqref{eq:u-max}. 

If $l_0 >1$, 
then we consider the expression of $q_{\s,i}$ as a sum of homogeneous polynomials
\algn{ \label{eq:q_si}
q_{\s,i} = \sum_{\tau \subset \jump{\s} , |\tau| = l_0 -1} \prod_{j \in \tau} x_j^r q_{\s,i,\tau} 
}
in which $q_{\s,i,\tau} \in (\mc{Q}_{r-1, \s}  \otimes \mc{Q}_{r-2, \s^*}) (\hat{T})$ is independent of the variables $x_i$ and $x_j$'s for $j \in \tau$.
Here $q_{\s,i, \tau}$ can be vanishing for some $\tau \subset \jump{\s}$. In the discussion below, we will make a system of equations with all possibly involving $q_{\s, i, \tau}$'s as its unknown. The equations are obtained from $\kappa u_{l_0, s_0}$ and $\dd u_{l_0, s_0}$, and the right-hand sides are zero. We then show that the system is well-posed, so all $q_{\s, i, \tau}$'s vanish. 

To derive equations from $\kappa u_{l_0, s_0}$, note that $\kappa u_{l_0, s_0} \in \kappa \dd \kappa \mc{B}_r\Lm^k (\hat{T}) = \kappa \mc{B}_r \Lm^k (\hat{T})$, so the nonconforming $(r+1)$-degree of form monomials in $\kappa u_{l_0, s_0}$ is at most 1 by Lemma~\ref{lemma:deg-invariance}. In the formal expression of $\kappa u_{l_0, s_0}$ using \eqref{eq:u-max} and \eqref{eq:q_si}, for fixed $\s$, the form monomials which have nonconforming $(r+1)$-degree 2 are obtained by
\algn{ \label{eq:kappa-uls}
\prod_{l \in \jump{\s^*}} x_l^2  x_i^{r-1}  \prod_{l \in \tau} x_l^r x_j \e(j,\s-j) q_{\s,i,\tau} \dd x_{\s-j}
}
for $\s \in \Sigma_k(\hat{T})$, $i \in \jump{\s^*}$, $\tau \subset \jump{\s}$ with $|\tau| = l_0 -1$, and $j \in \tau$.
We claim that all of these terms are linearly independent for fixed $\s\in \Sigma_k(\hat{T})$. To see it, assume that the form \eqref{eq:kappa-uls} with $(i, \tau, j)$
and $(i', \tau', j')$ are linearly dependent for $i' \in \jump{\s^*}$, $\tau' \subset \jump{\s}$, $j'\in\tau'$.
Comparison of the nonconforming indices with degree $(r+1)$ gives $i = i'$, $j = j'$ or $i = j'$, $j = i'$.
If $i = i'$ and $j=j'$, then $\tau = \tau'$ from the comparison of conforming indices of degree $r$. 
If $i = j'$ and $j = i'$, then $\dd x_{\s-j} \not = \dd x_{\s-j'}$, so they cannot be linearly dependent. Therefore 
the terms of the form \eqref{eq:kappa-uls} are all distinct for fixed $\s$. 

We now assume that \eqref{eq:kappa-uls} with $(\s, i, \tau, j)$ and $(\tilde{\s}, \tilde{i}, \tilde{\tau}, \tilde{j})$ are linearly dependent for $\tilde{\s} \not =\s$. Comparing the nonconforming indices of degree $(r+1)$, $i=\tilde{i}$, $j = \tilde{j}$ or $i = \tilde{j}$, $j = \tilde{i}$. However, if $i=\tilde{i}$ and $j = \tilde{j}$, then $\dd x_{\s-j} = \dd x_{\tilde{\s} - \tilde{j}}$ implies $\s = \tilde{\s}$ which is a contradiction, so $i = \tilde{j}$ and $j = \tilde{i}$. Regarding this and comparing the conforming indices of degree $r$, linear dependence occurs only when 
\algn{ \label{eq:comp-quad}
i = \tilde{j}, j = \tilde{i}, \quad 
\tau \cup \{i\} = \tilde{\tau} \cup \{\tilde{i}\}, \quad \s - \tilde{i} = \tilde{\s} - i .
}
As a consequence, for fixed $(\s, i, \tau, j)$ with $i \in \jump{\s}$ and $j \in \tau$, there is a unique quadruple $(\tilde{\s}, \tilde{i}, \tilde{\tau}, \tilde{j})$ determined by \eqref{eq:comp-quad} which may generate a linearly dependent polynomial differential form in the form of \eqref{eq:kappa-uls}. Since 
all form monomials which have nonconforming $(r+1)$-degree 2 are vanishing, 
\algn{ \label{eq:q-eq1}
\e(\tilde{i}, \s - \tilde{i}) q_{\s, i, \tau} + \e(i, \tilde{\s} - i) q_{\tilde{\s}, \tilde{i}, \tilde{\tau}} = 0
}
for the quadruples $(\s, i, \tau, j)$ and $(\tilde{\s}, \tilde{i}, \tilde{\tau}, \tilde{j})$ satisfying \eqref{eq:comp-quad}.

We now consider the expressions of form monomials with conforming $r$-degree $l_0$ in $\dd u_{l_0, s_0}$. From \eqref{eq:u-max} and \eqref{eq:q_si}, they have a form
\algn{ \label{eq:d-uls}
(r+1) x_i^{r-2}\prod_{l \in \jump{\s^*}} x_l^2 \prod_{l \in \tau} x_l^r \e(i, \s) q_{\s, i, \tau} \dd x_{\s + i} .
}
By checking the differential form component and the conforming indices of degree $r$ one can check that all of these terms are distinct over $i \in \jump{\s^*}$ and $\tau \subset \jump{\s}$ with $|\tau| = l_0 - 1$ for fixed $\s$. 

Considering $(\s, i, \tau)$ and $(\tilde{\s}, \tilde{i}, \tilde{\tau})$ generating linearly dependent differential forms of the form \eqref{eq:d-uls}, a direct comparison gives 
\algns{
\s + i = \tilde{\s} + \tilde{i}, \quad \tau \cup \{i\} = \tilde{\tau} \cup \{\tilde{i}\} 
}
which is exactly same as \eqref{eq:comp-quad}. The identity $\dd u_{l_0, s_0} = 0$ with \eqref{eq:kappa-uls} gives a new equation
\algn{ \label{eq:q-eq2}
\e(i, \s) q_{\s, i, \tau} + \e(\tilde{i}, \tilde{\s}) q_{\tilde{\s}, \tilde{i}, \tilde{\tau}} = 0.
}
By Lemma~\ref{lemma:eps-identity}, \eqref{eq:q-eq1}, and \eqref{eq:q-eq2}, it follows that $q_{\s, i, \tau} = q_{\tilde{\s}, \tilde{i}, \tilde{\tau}} = 0$. Since this is true for any $\s$, $i$, $\tau$, $u_{l_0,s_0}  = 0$ and this completes the proof.
\end{proof}

\begin{theorem}[unisolvence] \label{thm:unisolvence}
Let $u \in \tilde{\mc{Q}}_r \Lm^k(\hat{T})$. If 
\algn{ \label{eq:DOF-all}
\int_f \tr_f u \wedge v = 0 \qquad \forall v \in \sum_{\tau \in \Sigma_{l-k}(f)} (\mc{Q}_{r-2, \tau} \otimes \mc{Q}_{r,\tau^*}) (f) \dd x_{\tau},
}
for all $f \in \lap_l(\hat{T})$, $l \ge k$, then $u = 0$.
\end{theorem}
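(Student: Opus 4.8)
The plan is to reduce the full statement to the reduced unisolvency already established in Proposition~\ref{thm:reduced-unisolvence}, via an induction on the dimension $l$ of the faces $f \in \lap_l(\hat{T})$, running from $l=k$ up to $l=n$. Since the number of functionals in \eqref{eq:DOF-all} equals $\dim \tilde{\mc{Q}}_r\Lm^k(\hat{T})$ by the two preceding dimension counts, proving the injectivity statement as written (all functionals annihilate $u$ $\Rightarrow$ $u=0$) is exactly what is needed to conclude unisolvence.

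The first structural observation I would record is that the functionals in \eqref{eq:DOF-all} attached to a fixed $f \in \lap_l(\hat{T})$ together with those attached to the subfaces $f' \subsetneq f$ are precisely the degrees of freedom of $\tilde{\mc{Q}}_r\Lm^k(f)$ when $f$ is regarded as an $l$-dimensional reference cube. Together with the trace property (Theorem~\ref{thm:trace-property}), which upon iterating over the $n-l$ coordinates that must be fixed to reach $f$ yields $\tr_f u \in \tilde{\mc{Q}}_r\Lm^k(f)$, this lets me treat $\tr_f u$ as an intrinsic shape function on the smaller cube $f$ and test it against that cube's own degrees of freedom.

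The induction then proceeds as follows. For the base case $l=k$, a face $f \in \lap_k(\hat{T})$ carries only functionals with $\tau \in \Sigma_0(f)$, i.e. $v \in \mc{Q}_r(f)$, while $\tr_f u$ is a top-degree form $g\,\dd x_{\mathcal{I}}$ on the $k$-cube $f$; the (vacuous) facet-trace hypothesis and the top-form case of Proposition~\ref{thm:reduced-unisolvence} on $f$ force $g=0$, so $\tr_f u = 0$. For the inductive step, assume $\tr_g u = 0$ for every face $g$ with $\dim g < l$ and fix $f \in \lap_l(\hat{T})$. By transitivity of the trace ($\tr_g = \tr_g \circ \tr_f$ for $g \subseteq f$) one gets $\tr_g(\tr_f u) = \tr_g u = 0$ on every facet $g \subset f$, so $\tr_f u \in \tilde{\mc{Q}}_r\Lm^k(f)$ has vanishing trace on all $(l-1)$-dimensional subfaces of $f$. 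The functionals attached to $f$ itself are then exactly the reduced degrees of freedom \eqref{eq:reduced-DOF} for the $l$-cube $f$, which annihilate $\tr_f u$ by hypothesis; hence Proposition~\ref{thm:reduced-unisolvence}, applied with $\hat{T}$ replaced by $f$, gives $\tr_f u = 0$. Carrying the induction up to $l=n$, where $f=\hat{T}$ and $\tr_f u = u$, yields $u=0$.

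The main obstacle is the bookkeeping around the trace property. Theorem~\ref{thm:trace-property} is stated only for a single coordinate hyperplane, so establishing $\tr_f u \in \tilde{\mc{Q}}_r\Lm^k(f)$ for a general $l$-face requires iterating it $n-l$ times and verifying that the class $\tilde{\mc{Q}}_r\Lm^k$ is stable under each successive restriction onto a lower-dimensional cube, which is exactly where the conforming/nonconforming degree structure underpinning the definition must be checked to behave well under composed traces. The companion identification of the subface functionals with the intrinsic degrees of freedom of $\tilde{\mc{Q}}_r\Lm^k(f)$, together with transitivity of traces, is routine but must be stated explicitly for the induction to close.
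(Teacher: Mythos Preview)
Your proposal is correct and follows essentially the same route as the paper: induction on the face dimension $l$ from $k$ to $n$, using the (iterated) trace property to place $\tr_f u$ in $\tilde{\mc{Q}}_r\Lm^k(f)$, transitivity of traces to obtain the vanishing facet-trace hypothesis, and Proposition~\ref{thm:reduced-unisolvence} applied on $f$ at each step. Your remark that Theorem~\ref{thm:trace-property} must be iterated $n-l$ times is a fair point the paper leaves implicit, but the iteration is indeed routine since each successive restriction is again onto a coordinate hyperplane of the smaller cube.
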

\begin{proof}
If $l = k$, then $\tr_f u \in \tilde{\mc{Q}}_r \Lm^k(f) = \mc{Q}_r \Lm^k(f)$ for $f \in \lap_k(\hat{T})$ by Theorem~\ref{thm:trace-property}, 
and \eqref{eq:DOF-all} gives a standard set of degrees of freedom for $\mc{Q}_r \Lm^k(f)$, so $\tr_f u = 0$.
Suppose that $\tr_f u = 0$ for all $f \in \lap_l(\hat{T})$ for some $l \ge k$. For any given $\tilde{f} \in \lap_{l+1}(\hat{T})$, $\tr_{\tilde{f}} u \in \tilde{\mc{Q}}_r \Lm^k(\tilde{f})$ and $\tr_f \tr_{\tilde{f}} u = \tr_{f} u =0$ for all $f \in \lap_l(\tilde{f})$ by the assumption. By Proposition~\ref{thm:reduced-unisolvence} and the degrees of freedom \eqref{eq:DOF-all}, $\tr_{\tilde{f}} u = 0$. By induction, we can show that $\tr_f u = 0$ for all $f \in \lap_{n-1}(\hat{T})$, so $u= 0$ by Proposition~\ref{thm:reduced-unisolvence}.
\end{proof}

\subsection{Nodal tensor product degrees of freedom }

In this subsection we show that $\tilde{\mc{Q}}_r \Lm^k(\hat{T})$ has a set of degrees of freedom given by evaluating nodal values at
a set of points in $\hat{T}$. This alternative set of degrees of freedom will be used to define numerical methods with local coderivatives in the next section.

The Gauss--Lobatto quadrature rule with $r+1 (r \ge 1)$ points uses $r-1$ interior points
and two end points of $I = [-1,1]$ with positive weights and gives exact integration of polynomials of order $2r-1$.
Let $\{\xi^j \}_{j=0}^{r}$ be the quadrature points of the Gauss-Lobatto quadrature on $I$ and $\{ \lambda^j\}_{j=0}^r$ with $\lambda^j >0$ be the weights at the points. 
We can define a quadrature rule on $\hat{T}$ by taking tensor products of the Gauss-Lobatto 
quadrature nodes and weights, and we discuss the formal expressions for the tensor product quadrature rules below. 

For $\s \in \Sigma_k(\hat{T})$ let $N_{\s,r}$ be the set of points in $\bigotimes_{1 \le i \le k} I(x_{\s_i})$ defined by
\algns{
N_{\s,r} = \{ (x_{\s_1}, \ldots, x_{\s_k}) = (\xi^{j_1}, \ldots , \xi^{j_k}) \,:\, 0 \le j_l \le r, 1 \le l \le k \}.
}
$N_{\s^*,r}$ is defined similarly as 
\algns{
N_{\s^*,r} = \{ (x_{\s^*_1}, \ldots, x_{\s^*_{n-k}}) = (\xi^{j_1}, \ldots , \xi^{j_{n-k}}) \,:\, 0 \le j_l \le r, 1 \le l \le n-k \},
}
the set of points in the $(n-k)$-dimensional cube $\bigotimes_{1 \le i \le n-k} I(x_{\s^*_i})$. Therefore the tensor product $N_r := N_{\s,r} \otimes N_{\s^*, r}$ is the set given by the tensor product of Gauss-Lobatto quadrature points on $\hat{T}$.
Letting $\Bbb{N}$ be the set of nonnegative integers, we will use $\xi_\s^{\bs{i}}$ and $\xi_{\s^*}^{\bs{j}}$ with multi-indices $\bs{i} \in \Bbb{N}^k$ and $\bs{j} \in \Bbb{N}^{n-k}$ to denote the points in $N_{\s, r}$ and $N_{\s^*, r}$, respectively. We also use $\lambda_\s^{\bs{j}}$ and $\lambda_{\s^*}^{\bs{j}}$ to denote the corresponding tensor-product weights of the Gauss-Lobatto quadrature.

For a continuous function $v$ on $\hat{T}$, we define $R_{r}(v)$, $E_{r, \xi_{\s}^{\bs{i}} } (v)$, $E_r(v)$ as 
\algns{
R_{r}(v) &= \LRp{ v(\xi_\s^{\bs{i}} \otimes \xi_{\s^*}^{\bs{j}} ) }_{\xi_\s^{\bs{i}} \otimes \xi_{\s^*}^{\bs{j}} \in N_{r}} \in \R^{(r+1)^n}, \\
E_{r, \xi_{\s}^{\bs{i}} } (v) &= \sum_{\xi_{\s^*}^{\bs{j}} \in N_{r,\s^*} }  \lambda_{\s^*}^{\bs{j}} v(\xi_\s^{\bs{i}} \otimes \xi_{\s^*}^{\bs{j}} ), \\
E_{r}(v) &= \sum_{\xi_{\s}^{\bs{i}} \in N_{r, \s} } \lambda_\s^{\bs{i}} E_{r, \xi_{\s}^{\bs{i}} } (v) .
}
Similarly, for polynomial differential forms $u = u_\s \dd x_\s \in \mc{P} \Lm^k (\hat{T})$ 
we define $R_{r} (u)$ as the element in $\R^M \otimes \Lm^k$ with $M={\pmat{n \\ k} (r+1)^n}$, which consists of $R_{r}(u_\s) \otimes \dd x_\s$ for $\s \in \Sigma_k(\hat{T})$.

\begin{lemma} \label{lemma:reduced-nodal-DOF}
For $v = \sum_{\s \in \Sigma_k(\hat{T})} v_{\s} \dd x_{\s}$ suppose that a polynomial $v_{\s}$ has a form 
with 
\algns{
v_{\s} = b_{\s^*} \tilde{v}_{\s} , \qquad b_{\s^*} := \prod_{l \in \jump{\s^*}} (1-x_l^2),  \quad 
\tilde{v}_{\s} \in (\mc{Q}_{r-1,\s^*} \otimes \mc{Q}_{r, \s}) (\hat{T}) .
}
Suppose also that nonconforming $(r+1)$-degree of every form monomial in $v_{\s} \dd x_\s$ is at most 1 for all $\s \in \Sigma_k(\hat{T})$. 
If $R_{r}(v_{\s}) = 0$ all $\s \in \Sigma_k(\hat{T})$, then $v = 0$.
\end{lemma}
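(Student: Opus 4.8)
The plan is to show $v_{\s}=0$ for each fixed $\s\in\Sigma_k(\hat T)$ separately, since $v=0$ is equivalent to the vanishing of every coefficient and all hypotheses are imposed coefficientwise. Fixing $\s$, I split the coordinates into the conforming group $\{x_i:i\in\jump{\s}\}$ and the nonconforming group $\{x_l:l\in\jump{\s^*}\}$. Because $\tilde v_{\s}\in\mc{Q}_{r-1,\s^*}\otimes\mc{Q}_{r,\s}$, the coefficient $v_{\s}=b_{\s^*}\tilde v_{\s}$ has degree at most $r$ in each conforming variable and degree at most $r+1$ in each nonconforming variable, the latter entering only through the factor $(1-x_l^2)$. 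The first step is to dispose of the conforming directions by tensor-product unisolvence: a univariate polynomial of degree $\le r$ is determined by its values at the $r+1$ Gauss--Lobatto nodes, so iterating over $i\in\jump{\s}$ and extracting Lagrange coefficients reduces the claim to showing that each nonconforming coefficient-polynomial of $v_{\s}$ vanishes at every tensor product of nonconforming Gauss--Lobatto nodes.

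Next I would exploit the structure of $b_{\s^*}$. Each factor $(1-x_l^2)$ vanishes at the endpoints $\pm1$, so $v_{\s}$ automatically vanishes at any node having a nonconforming coordinate equal to $\pm1$; hence the only informative nodes are the purely interior ones, and the interior Gauss--Lobatto nodes in $x_l$ are exactly the $r-1$ roots of the monic weighted Legendre polynomial $L_{r-1}^w$. Here the degree hypothesis $\ncdeg_{r+1}(m)\le1$ is the crucial structural input: a direct count of degrees shows it forces $\tilde v_{\s}$ to carry, in every monomial, at most one nonconforming variable of degree $r-1$. Consequently $v_{\s}$ can be put in the normalized form $v_{\s}=b_{\s^*}\bigl(w_0+\sum_{l\in\jump{\s^*}}L_{r-1}^w(x_l)\,\tilde w_l\bigr)$, where $w_0$ has nonconforming degree $\le r-2$ and each $\tilde w_l$ is independent of $x_l$ with nonconforming degree $\le r-2$ in the remaining variables.

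Evaluating this normalized form at a purely interior node annihilates every term $L_{r-1}^w(x_l)\tilde w_l$, so the nodal conditions see only $b_{\s^*}w_0$; since $w_0$ has nonconforming degree $\le r-2$ and must vanish at all $r-1$ interior nodes in each nonconforming variable, tensor-product unisolvence forces $w_0=0$. The main obstacle, and the heart of the argument, is then to rule out the surviving part $\sum_l L_{r-1}^w(x_l)\tilde w_l$, which lies in the kernel of pure point evaluation: this is precisely the weighted-Legendre ambiguity that interior Gauss--Lobatto nodes cannot detect on their own, so the degree hypothesis alone is not enough and one must bring in the exterior-derivative structure inherited from $\tilde{\mc{Q}}_r\Lm^k$. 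I expect this step to mirror the bookkeeping of Proposition~\ref{thm:reduced-unisolvence}: apply $\dd$ and $\kappa$ to the offending top-degree part, compare the conforming $r$-degree and nonconforming $(r+1)$-degree of the resulting form monomials, and invoke the sign identity of Lemma~\ref{lemma:eps-identity} to obtain a homogeneous linear system in the $\tilde w_l$ whose only solution is zero. Identifying which index quadruples couple and verifying nonsingularity of the resulting coefficient matrix is where the real work will lie.
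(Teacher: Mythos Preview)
Your plan is sound and, for the first half, takes a cleaner route than the paper. To kill $\tilde v_{\s,0}$ (your $w_0$), the paper pairs $v_\s$ against products of weighted Legendre polynomials via the partial quadrature $E_{r,\xi_\s^{\bs i}}$, invoking exactness of Gauss--Lobatto in degree $\le 2r-1$ to replace the quadrature along one nonconforming direction by an honest integral, and then uses $L^2_{(1-t^2)}$-orthogonality to peel off the $L_{r-1}^w$ terms. You instead go straight to the nodes: the interior Gauss--Lobatto points are exactly the zeros of $L_{r-1}^w$ (equivalently of $L_r'$), so evaluation at any purely interior node already annihilates every $L_{r-1}^w(x_l)\tilde w_l$ term, leaving $b_{\s^*}w_0$; since $b_{\s^*}$ is nonzero at interior nodes and $w_0$ has nonconforming degree $\le r-2$, tensor-product unisolvence against the $r-1$ interior nodes forces $w_0=0$. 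This is more direct and avoids the quadrature-as-integration detour.

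For the second half both you and the paper simply defer to the argument of Proposition~\ref{thm:reduced-unisolvence}, and you are right to flag that the stated hypotheses of the lemma do not suffice on their own. In fact the lemma as written is false without the implicit assumption $v\in\tilde{\mc{Q}}_r\Lm^k(\hat T)$: for $n=2$, $k=1$, the form $(1-x_2^2)L_{r-1}^w(x_2)\,x_1^{r}\,\dd x_1$ satisfies every listed condition and vanishes at every Gauss--Lobatto node, yet is nonzero and lies outside $\tilde{\mc{Q}}_r\Lm^1$. The unisolvency argument succeeds because it uses $\dd u_{l_0,s_0}=0$ and the bound $\ncdeg_{r+1}\le 1$ on form monomials of $\kappa u_{l_0,s_0}$, both of which come from membership in $\dd\kappa\mc{B}_r\Lm^k$, not from the polynomial-degree hypotheses alone. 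So your instinct to import the $\tilde{\mc{Q}}_r\Lm^k$ structure is exactly what the paper (tacitly) does; just be aware that this is an unstated hypothesis rather than something you can derive from the lemma's assumptions.
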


\begin{proof}
If we rewrite $\tilde{v}_{\s}$ with the weighted Legendre polynomial $L_j^w$'s, then
the assumption on nonconforming $(r+1)$-degree leads us to have 
\algn{ \label{eq:v-tilde-form}
\tilde{v}_{\s} = \sum_{i \in \jump{\s^*}} L_{r-1}^w(x_i) p_{\s,i} + \tilde{v}_{\s,0}
}
with $p_{\s,i} \in (\mc{Q}_{r,\s} \otimes \mc{Q}_{r-2, \s^*}) (\hat{T})$ independent of $x_i$, 
in which $\tilde{v}_{\s,0}$ is a sum of polynomials of the form
\algn{ \label{eq:v0-tilde}
\prod_{i \in \jump{\s^*}} L_{d_i}^w (x_i) q_{\bs{d}} (x_{\s})
}
where $\bs{d} = (d_{\s^*_1}, \ldots, d_{\s^*_{n-k}}) \in \Bbb{N}^{n-k}$ with 
$\max_{i \in \jump{\s^*}} \{ d_i \} \le r-2$ and $q_{\bs{d}} \in \mc{Q}_{r, \s} (\hat{T})$.

We first to show that $\tilde{v}_{\s,0} = 0$. 
Let $0 \not = \psi = \prod_{i \in \jump{\s^*}} L_{d_i'}^w (x_i) \in \mc{Q}_{r-2, \s^*}(\hat{T})$ with $d_i'$'s satisfying $0 \le d_i' \le r-2$, $i \in \jump{\s^*}$.
We claim that 
\algns{
E_{r,\xi_{\s}^{\bs{i}} } \LRp{ b_{\s^*} L_{r-1}^w (x_i) p_{\s,i} \psi } = 0
}
for all $i \in \jump{\s^*}$ and $\xi_{\s}^{\bs{i}} \in N_{r,\s}$. To see it, note that the quadrature along $x_i$ coordinate can be replaced by integration with $x_i$ variable on $[-1, 1]$ 
because the degree of $x_i$ variable is $r + 1+ d_i' \le 2r -1$
and $L_{r-1}^w(x_i)$ is orthogonal to $L_{d_i'}^w(x_i)$ with $(1-x_i^2)$ weight. Therefore we have
\algns{
E_{r,\xi_{\s}^{\bs{i}} }(v \phi) = E_{r,\xi_{\s}^{\bs{i}} } \LRp{ b_{\s^*} \tilde{v}_{\s,0} \psi } .
}
If we consider the expression of $\tilde{v}_{\s,0}$ in \eqref{eq:v0-tilde}, 
a completely analogous argument using orthogonality of Legendre polynomials gives 
\algns{
E_{r,\xi_{\s}^{\bs{i}} }(v \phi) = E_{r,\xi_{\s}^{\bs{i}} } \LRp{ b_{\s^*} \psi^2 (x_{\s^*}) q_{\bs{d}'}(x_\s)  } .
}
where $\bs{d}' = (d_{\s_1^*}', \ldots, d_{\s_{n-k}^*}')$.
Note that this result is obtained without using the assumption $R_r(v_{\s}) = 0$.  
Since $R_{r}(v_{\s}) = 0$, the above quantity vanishes. By the definition of $E_{r,\xi_{\s}^{\bs{i} } }$, we have either $b_{\s^*} (\xi_{\s^*}^{\bs{j}} ) \psi(\xi_{\s^*}^{\bs{j}})=0$ for all $\xi_{\s^*}^{\bs{j}} \in N_{r, \s^*}$ or $q_{\bs{d}'} (\xi_{\s}^{\bs{i}}) = 0$. However, the first case implies that $b_{\s^*} \psi  = 0$ because the nodal value evaluations at the points in $N_{r, \s^*}$ is already a set of degrees of freedom for $\mc{Q}_{r, \s^*} (\hat{T})$, and it leads to a contradiction to $\psi \not = 0$. Therefore $q_{\bs{d}}$ vanishes at $\xi_{\s}^{\bs{i}}$, and we can show that $q_{\bs{d}}$ vanishes at any point in $N_{r,\s}$ with the same argument. Recall that $q_{\bs{d}'} \in \mc{Q}_{r,\s} (\hat{T})$, so these vanishing conditions of $q_{\bs{d}'}$ implies that $q_{\bs{d}'} = 0$. Finally, this holds for any $\bs{d}'$, and therefore $\tilde{v}_{\s,0} = 0$.

To show $v = 0$, we notice that $v_{\s}$ with \eqref{eq:v-tilde-form} is exactly the form of $u_{\s}$ in \eqref{eq:u-reduced}, so the same argument in the unisolvency proof can be used to show $v = 0$. 
\end{proof}

\begin{theorem}
Suppose that $v \in \tilde{\mc{Q}}_r \Lm^k (\hat{T})$ and $R_{r} (v) = 0$  holds. Then $v = 0$. 
\end{theorem}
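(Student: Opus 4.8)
The plan is to mirror the inductive trace-reduction argument of Theorem~\ref{thm:unisolvence}, but with the moment degrees of freedom replaced by nodal evaluation at the tensor-product Gauss--Lobatto points, and with Proposition~\ref{thm:reduced-unisolvence} replaced by its nodal counterpart, Lemma~\ref{lemma:reduced-nodal-DOF}. Since $\dim \tilde{\mc{Q}}_r\Lm^k(\hat{T}) = \binom{n}{k}(r+1)^n$ equals the number of scalar conditions encoded by $R_{r}$, it suffices to prove injectivity, i.e.\ that $R_{r}(v) = 0$ forces $v = 0$. The structural fact I would invoke at the outset, already recorded in the proof of Proposition~\ref{thm:reduced-unisolvence}, is that every form monomial of an element of $\tilde{\mc{Q}}_r\Lm^k(\hat{T})$ has nonconforming $(r+1)$-degree at most $1$; this is exactly the hypothesis required to apply Lemma~\ref{lemma:reduced-nodal-DOF}.

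The key compatibility is that, because the Gauss--Lobatto nodes include the endpoints $\pm 1$, the points of $N_{r}$ lying on a face $f \in \lap_l(\hat{T})$ are precisely the tensor-product Gauss--Lobatto points of $f$. Consequently $R_{r}(v) = 0$ implies $R_{r}(\tr_f v) = 0$ for every face $f$, where $\tr_f v \in \tilde{\mc{Q}}_r\Lm^k(f)$ by Theorem~\ref{thm:trace-property}. I would then induct on the face dimension $l$, from $l = k$ up to $l = n-1$.

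For the base case $l = k$, a face $f \in \lap_k(\hat{T})$ carries $\tr_f v \in \tilde{\mc{Q}}_r\Lm^k(f) = \mc{Q}_r\Lm^k(f)$, whose single coefficient lies in $\mc{Q}_r(f)$; since the $r+1$ Gauss--Lobatto nodes per axis are distinct, their tensor product is unisolvent for $\mc{Q}_r(f)$, so $R_{r}(\tr_f v) = 0$ gives $\tr_f v = 0$. For the inductive step, suppose $\tr_f v = 0$ on all $f \in \lap_l(\hat{T})$ and fix $\tilde{f} \in \lap_{l+1}(\hat{T})$. Vanishing of the traces of $\tr_{\tilde{f}} v$ on the $l$-faces of $\tilde{f}$ forces each coefficient to carry the boundary factor $b_{\s^*} = \prod_{l \in \jump{\s^*}}(1 - x_l^2)$, exactly as in Proposition~\ref{thm:reduced-unisolvence}; combined with the nonconforming $(r+1)$-degree bound, this places $\tr_{\tilde{f}} v$ into the precise form to which Lemma~\ref{lemma:reduced-nodal-DOF} applies on the cube $\tilde{f}$. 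Since $R_{r}(\tr_{\tilde{f}} v) = 0$ by the compatibility above, the lemma yields $\tr_{\tilde{f}} v = 0$. Iterating up to $l = n-1$ and applying the lemma once more on $\hat{T}$ itself gives $v = 0$.

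The routine inputs — endpoint inclusion of the Gauss--Lobatto nodes, tensor-product unisolvency of $\mc{Q}_r$, and the trace property — are all available, so I expect the substantive work to be entirely absorbed into Lemma~\ref{lemma:reduced-nodal-DOF}. The only point demanding care in the present theorem is verifying that at each stage of the induction the trace genuinely satisfies the structural hypotheses of that lemma, namely a coefficient of the form $b_{\s^*}\tilde{v}_{\s}$ with $\tilde{v}_{\s}$ of the stated tensor degree and nonconforming $(r+1)$-degree at most $1$, so that the lemma applies verbatim on each lower-dimensional cube.
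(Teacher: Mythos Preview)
Your proposal is correct and follows essentially the same approach as the paper: induct on the face dimension $l$ from $k$ upward, use tensor-product Gauss--Lobatto unisolvency for $\mc{Q}_r\Lm^k(f)$ in the base case, and at each step invoke Lemma~\ref{lemma:reduced-nodal-DOF} on the face $\tilde{f}$ after the vanishing-trace hypothesis produces the $b_{\s^*}$ factor. Your write-up is in fact more explicit than the paper's about verifying the structural hypotheses of the lemma and about the role of endpoint inclusion in the Gauss--Lobatto rule for restricting $R_r$ to faces.
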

\begin{proof}
Note that $\tr_f v \in \tilde{\mc{Q}}_r \Lm^k(f) = \mc{Q}_r \Lm^k(f)$ for $f \in \lap_k(\hat{T})$. Since the restriction of $R_r$ on $f$ becomes a set of quadrature degrees of freedom of  $\mc{Q}_r \Lm^k(f)$, $\tr_f v = 0$ for all $f \in \lap_k(\hat{T})$ holds. For $\tr_g v$ with $g \in \lap_{k+1}(\hat{T})$, all traces of $\tr_g v$ on $k$-dimensional subcubes are vanishing, so the assumption of Lemma~\ref{lemma:reduced-nodal-DOF} is satisfied for $\tr_g v$. Applying Lemma~\ref{lemma:reduced-nodal-DOF} with the restriction of $R_r$ on $g$, one can conclude that $\tr_g v = 0$ for any $g \in \lap_{k+1}(\hat{T})$. We can continue this argument for any $g \in \lap_l(\hat{T})$, $l \ge k+1$, by induction, so the conclusion follows. 
\end{proof}

\section{Numerical methods with local coderivatives}\label{local-c}

We construct numerical methods with local coderivatives using $\tilde{\mc{Q}}_r \Lm^k (\mc{T}_h)$.
For this we need a modified bilinear form $\LRa{\cdot, \cdot}_h$ in {\bf (A)},
and the auxiliary spaces $\tilde{V}_h^{k-1}$, $W_h^{k-1}$ and the map $\Pi_h$ in {\bf (B)}.
The conditions {\bf (A)}, {\bf (B)} are stated with index $k-1$ but for simplicity we will check 
the conditions in this section for $V_h^k$, $\tilde{V}_h^k$, $W_h^k$ which we choose as 
\algn{ \label{eq:V-spaces}
V_h^k = \tilde{\mc{Q}}_r \Lm^k(\mc{T}_h), \quad \tilde{V}_h^k = {\mc{Q}}_r^- \Lm^k(\mc{T}_h), \quad W_h^k = {\mc{Q}}_{r-1}^d \Lm^k(\mc{T}_h)
}
where $\mc{Q}_{r-1}^d \Lm^k(\mc{T}_h)$ is the space of discontinuous polynomial differential forms.
We will show that the finite elements $\tilde{\mc{Q}}_r \Lm^k (\mc{T}_h)$ and $\mc{Q}_r^- \Lm^k (\mc{T}_h)$ satisfy the conditions {\bf (A)}, {\bf (B)}. 
In addition to the conditions in {\bf (A)} and {\bf (B)}, for local coderivatives, we also need to show that $\LRa{\cdot, \cdot}_h$ on $\tilde{\mc{Q}}_r \Lm^k(\mc{T}_h)$ can give a block diagonal matrix with appropriate choice of global DOFs.

We have shown that $\tilde{\mc{Q}}_r \Lm^k({\hat{T}})$ has a set of DOFs determined by evaluations at nodal points. 
For $T \in \mathcal{T}_h$ we can define the evaluation operator $E_r^T$ for continuous functions on $T$ with the scaled Gauss-Lobatto quadrature rules on $T$. 

For $u, v \in \tilde{\mc{Q}}_r \Lm^k(T)$ with expressions 
$u = \sum_{\s \in \Sigma_k(T)} u_{\s} \dd x_{\s}$, $v = \sum_{\s \in \Sigma_k(T)} v_{\s} \dd x_{\s}$,  we define $\LRa{u, v}_{h,T}$ by
\begin{equation} \label{quadrature-c}
\LRa{u, v }_{h,T} = |T| \sum_{\s \in \Sigma_k(T)} E_r^T (u_{\s} v_{\s} ) .
\end{equation}
It is easy to see that $\LRa{\cdot, \cdot }_{h,T}$ is an inner product on $\tilde{\mc{Q}}_r \Lm^k(T)$ and the norm defined by this inner product is equivalent to the $L^2$ norm with constants independent on the scaling of $T$. We can define the inner product $\LRa{\cdot, \cdot}_h$ on $\tilde{\mc{Q}}_r \Lm^k(\mathcal{T}_h)$ by 
\algn{ \label{quadrature-all}
\LRa{u, v}_h = \sum_{T \in \mathcal{T}_h} \LRa{ u|_T, v|_T}_{h,T}
}
for $u, v \in \tilde{\mc{Q}}_r \Lm^k(\mathcal{T}_h)$, and the norm $\| \cdot \|_h$ defined by this inner product is equivalent to the $L^2$ norm with constants independent of $h$. Therefore {\bf (A)} is satisfied.


We now verify {\bf (B)}, but with $k$-forms instead of $(k-1)$-forms for notational convenience. Recall that we already defined $\tilde{V}_h^k$ and $W_h^k$ in
\eqref{eq:V-spaces}, so verification of {\bf (B)} consists of the following three steps:
\begin{itemize}
  \item[Step 1.]  Prove  \eqref{eq:B-cond} 
  \item[Step 2.] Define  $\Pi_h$  satisfying the conditions in {\bf (B)} 
  \item[Step 3.] Prove  \eqref{eq:B-cond2}
\end{itemize}

For Step 1, it is enough to show the identity in \eqref{eq:B-cond} for the restrictions of polynomial differential forms on any $T \in \mathcal{T}_h$. Moreover, by scaling argument, it suffices to show the equality on the reference element $\hat{T}$, i.e., 
\algns{
  \LRa{ u, v }_{h, \hat{T}} = \LRa{ u, v}, \qquad u \in \mc{Q}_r^- \Lm^k(\hat{T}), \quad v \in \mc{Q}_{r-1} \Lm^k (\hat{T}) .
}
In fact, this equality is true because the Gauss-Lobatto quadrature rule with $(r+1)$ points give exact integration for polynomials of degree $2r-1$. This completes the proof of Step~1.

The following result gives a proof of Step 2. 
\begin{theorem}
Let $\Pi_h:\tilde{\mc{Q}}_r \Lm^k(\mc{T}_h) \ra \mc{Q}_r^- \Lm^k(\mc{T}_h)$ be the interpolation operator defined by the canonical degrees of freedom of $\mc{Q}_r^- \Lm^k(\mc{T}_h)$, i.e., $\Pi_h u$ for  $u \in \tilde{\mc{Q}}_r \Lm^k(T)$ is characterized by 
\algn{ \label{eq:Qrminus-DOF}
\int_f \tr_f \Pi_h u \wedge v  &= \int_f \tr_f u \wedge v, & & v \in \mc{Q}_{r-1}^- \Lm^{l-k} (f), f \in \lap_l(\mathcal{T}_h), l \ge k.
}
Then $\Pi_h$ is bounded in $L^2\Lm^k(\Omega)$ with a norm independent of $h$, and  $\dd (u - \Pi_h u) = 0$ for $u \in \tilde{\mc{Q}}_r \Lm^k(\mathcal{T}_h)$.
\end{theorem}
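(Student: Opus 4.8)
The plan is to prove the two claims separately: the $h$-uniform $L^2$-boundedness of $\Pi_h$, and the identity $\dd(u-\Pi_h u)=0$. First I would record that $\Pi_h$ is well defined. The functionals in \eqref{eq:Qrminus-DOF} are the canonical unisolvent degrees of freedom of $\mc{Q}_r^-\Lm^k$ (cf. \cite{Arnold-Boffi-Bonizzoni-2015}), and since every $u\in\tilde{\mc{Q}}_r\Lm^k(\mc{T}_h)$ is a polynomial on each element its traces are classical, so the right-hand sides make sense; because $u\in H\Lm^k(\Omega)$ these traces are single-valued across interelement faces, so the shared degrees of freedom are consistent and $\Pi_h u$ lies in the conforming space $\mc{Q}_r^-\Lm^k(\mc{T}_h)$.

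For the boundedness, since $\| \Pi_h u \|_{L^2(\Omega)}^2=\sum_{T\in\mc{T}_h}\| \Pi_h u|_T \|_{L^2(T)}^2$, it suffices to bound $\Pi_T:=\Pi_h|_T$ on each element with a constant independent of $T$. I would pass to the reference element $\hat{T}$ through the affine pullback $F_T^*$, where $F_T$ is a composition of dilation and translation. Because $\tilde{\mc{Q}}_r\Lm^k$ and $\mc{Q}_r^-\Lm^k$ are invariant under such maps (Lemma~\ref{invariant} together with the classical invariance of $\mc{Q}_r^-\Lm^k$), and the defining integral functionals transform bijectively under $F_T^*$ (pullback commuting with trace and wedge), interpolation commutes with the pullback, i.e. $\Pi_T=(F_T^{-1})^*\,\Pi_{\hat{T}}\,F_T^*$. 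On $\hat{T}$ the map $\Pi_{\hat{T}}:\tilde{\mc{Q}}_r\Lm^k(\hat{T})\ra\mc{Q}_r^-\Lm^k(\hat{T})$ is linear between finite-dimensional spaces, hence bounded in the $L^2(\hat{T})$ norm; transferring this bound through the pullback, the scaling factors from $F_T^*$ and $(F_T^{-1})^*$ cancel (exactly for cubes, and up to the shape-regularity constant for boxes), giving $\| \Pi_T u \|_{L^2(T)}\lesssim\| u \|_{L^2(T)}$ uniformly in $h$.

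For the identity $\dd(u-\Pi_h u)=0$ the crucial point is that, although $\tilde{\mc{Q}}_r\Lm^k\not\subset\mc{Q}_r\Lm^k$ in general, the exterior derivative of any shape function already lands in the conforming complex $\mc{Q}_r^-\Lm^{k+1}$. Indeed, writing $u=u^-+\dd\kappa b$ with $u^-\in\mc{Q}_r^-\Lm^k$ and $b\in\mc{B}_r\Lm^k$ according to \eqref{eq:Qtilde-shape}, we have $\dd u=\dd u^-$ since $\dd\dd\kappa b=0$, and $\dd u^-\in\dd\mc{Q}_r^-\Lm^k\subset\mc{Q}_r^-\Lm^{k+1}$ by the subcomplex property of the $\mc{Q}_r^-$ family. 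I would then apply the commuting property $\dd\Pi_h^k=\Pi_h^{k+1}\dd$ of the canonical interpolants of the $\mc{Q}_r^-\Lm^\bullet$ complex, valid for any polynomial form, to obtain $\dd\Pi_h u=\Pi_h^{k+1}\dd u=\dd u$, the last equality because $\Pi_h^{k+1}$ fixes elements of $\mc{Q}_r^-\Lm^{k+1}$.

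If one prefers to keep this self-contained, the commuting step can be argued directly: both $\dd\Pi_h u$ and $\dd u$ lie in $\mc{Q}_r^-\Lm^{k+1}$, so by unisolvency it suffices to match their degrees of freedom; using $\tr_g\dd=\dd\tr_g$, the Leibniz rule, and Stokes' theorem on each face $g$, the value of the functional for $\dd\Pi_h u$ is expressed through integrals of $\tr\Pi_h u$ over $g$ against $\dd v$ and over $\partial g$ against $\tr v$, all of which coincide with the corresponding integrals of $u$ by \eqref{eq:Qrminus-DOF}, once one checks that the arising test forms $\dd v$ and $\tr_{g'}v$ remain in the admissible spaces $\mc{Q}_{r-1}^-\Lm^\bullet$. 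I expect the main obstacle to be precisely this exactness step: it is what forces the use of the decomposition $u=u^-+\dd\kappa b$ to confine $\dd u$ to the conforming $\mc{Q}_r^-$ complex, whereas the scaling in the boundedness part is routine.
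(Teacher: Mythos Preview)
Your proof is correct and, in its self-contained form, coincides with the paper's argument: the paper also observes $\dd(u-\Pi_h u)\in\mc{Q}_r^-\Lm^{k+1}(T)$ and then verifies that all canonical degrees of freedom vanish via $\tr_f\dd=\dd\tr_f$ and Stokes' theorem, exactly as in your last paragraph. For boundedness the paper takes a slightly different but equivalent route: rather than scaling to $\hat{T}$, it notes that the canonical degrees of freedom of $\mc{Q}_r^-\Lm^k$ form a subset of those of $\tilde{\mc{Q}}_r\Lm^k$ (since $\mc{Q}_{r-1}^-\Lm^{l-k}(f)\subset\sum_{\tau}(\mc{Q}_{r-2,\tau}\otimes\mc{Q}_{r,\tau^*})(f)\,\dd x_\tau$), and then invokes equivalence of the $L^2$ norm with the discrete norm induced by the degrees of freedom on each space.
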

\begin{proof}
Note the identity  
\algns{
\mc{Q}_r^- \Lm^k = \bigoplus_{\s \in \Sigma_k} \mc{Q}_{\s, r-1} \otimes \mc{Q}_{\s^*, r} \dd x_{\s} 
}
from the characterization of $\mc{Q}_r^- \Lm^k$. By the definitions of the degrees of freedom of
$\mc{Q}_r^- \Lm^k$ and $\tilde{\mc{Q}}_r \Lm^k$, the degrees of freedom of $\mc{Q}_r^- \Lm^k(\mathcal{T}_h)$ is a subset of the degrees of freedom of $\tilde{\mc{Q}}_r \Lm^k (\mathcal{T}_h)$. Therefore the uniform $L^2$ boundedness of $\Pi_h$ is a consequence of equivalence of the $L^2$ norm and a discerete norm defined by the degrees of freedom on each of these spaces.

To show $\dd ( u- \Pi_h u)  = 0$, without loss of generality, 
we consider $u$ defined only on one element $T$.  
Since $\dd ( u- \Pi_h u) \in \mc{Q}_r^- \Lm^{k+1}(T)$, it is sufficient to show that 
\algn{ \label{eq:du-tr-DOF}
\int_f \tr_f \dd (u - \Pi_h u) \wedge v = 0 \qquad \forall v \in \mc{Q}_{r-1}^- \Lm^{l - k - 1} (f), f \in \lap_{l}(T), l \ge k+1 
}
by the canonical degrees of freedom of $\mc{Q}_r^- \Lm^{k+1} (T)$. 
These vanishing identities follow from the commutativity of $\dd$ and $\tr_f$, and Stokes' theorem by
\algns{ 
\int_f \tr_f \dd (u - \Pi_h u) \wedge v &= \int_f \dd \tr_f (u - \Pi_h u) \wedge v \\
&= \int_{\pd f} \tr_{\pd f} \tr_f (u - \Pi_h u) \wedge \tr_{\pd f} v + \int_f \tr_f (u - \Pi_h u) \wedge \dd v \\
&= 0 
}
where the last equality follows from $\tr_{\pd f} \tr_f = \tr_{\pd f}$, the hierarchical trace property of $\mc{Q}_r^- \Lm^k$ spaces, the inclusion $\dd v \in \mc{Q}_{r-1}^- \Lm^{l-k} (f)$, and \eqref{eq:Qrminus-DOF}. 
\end{proof}

For Step 3, we can reduce \eqref{eq:B-cond2} to the corresponding identity on $\hat{T}$, i.e., it is enough to show
\algns{
\LRa{ u - \Pi_h u, v}_{h, \hat{T}} = 0
}
for $u \in \tilde{\mc{Q}}_r \Lm^k(\hat{T})$ and $v \in \mc{Q}_{r-1} \Lm^k (\hat{T})$. 

Before we start its proof, recall that the quadrature nodes of the Gauss-Lobatto rule with $(r+1)$ points
are the zeros of $\frac{d}{dt} L_r(t)$ on $[-1, 1]$. We also note that 
\algns{ 
(r+1)(L_{r+1}(t) - L_{r-1}(t) ) = (2r+1)(t L_r (t) - L_{r-1}(t)) = (2r+1) \frac{t^2 - 1}{r} \frac{d}{dt} L_r(t),
}
so 
\algn{ \label{eq:legendre-identity}
L_{r+1}(\xi^j) - L_{r-1}(\xi^j) = 0 \qquad 0 \le j \le r ,
}
i.e., the evaluation of $ L_{r+1}(t) - L_{r-1}(t)$ at the quadrature nodes of the Gauss-Lobatto rule with $(r+1)$ points vanishes. 

As $\Pi_h$, we define $\hat{\Pi}_h: \tilde{\mc{Q}}_r \Lm^k (\hat{T}) \ra \mc{Q}_r^- \Lm^k(\hat{T})$ as
\algns{
\int_f u \wedge v = \int_f \hat{\Pi}_h u \wedge v \qquad v \in \mc{Q}_{r-1}^- \Lm^{l-k} (f), f \in \lap_l(\hat{T}), l \ge k. 
}

\begin{theorem} \label{thm:intp-form}
Let $v = u - \hat{\Pi}_h u$ for $u \in \tilde{\mc{Q}}_r \Lm^k(\hat{T})$. In $v = \sum_{\s \in \Sigma_k(\hat{T})} v_\s \dd x_\s$ every $v_\s$
has a form 
\algn{ \label{eq:u-piu-form}
v_\s = \sum_{i \in \jump{\s^*}} \LRp{L_{r+1}(x_i) - L_{r-1}(x_i) } p_{\s, i} + v_{\s, 0}
}
where $p_{\s, i} \in \mc{Q}_{\s^*, r} (\hat{T}) \otimes \mc{Q}_{\s, r-1} (\hat{T})$, and every term in $v_{\s,0}$ written with the Legendre polynomials has a factor of 
$L_r(x_j)$ for some $j \in \jump{\s}$. 
\end{theorem}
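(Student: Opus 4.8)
The plan is to exploit that, by the very definition of $\hat\Pi_h$, the difference $v = u - \hat\Pi_h u$ has \emph{vanishing} $\mc{Q}_r^-$ degrees of freedom on every face: $\int_f \tr_f v \wedge w = 0$ for all $w \in \mc{Q}_{r-1}^- \Lm^{l-k}(f)$, $f \in \lap_l(\hat{T})$, $l \ge k$. Since $v \in \tilde{\mc{Q}}_r\Lm^k(\hat{T})$, each coefficient $v_\s$ inherits the degree structure of $\tilde{\mc{Q}}_r\Lm^k$ coming from Lemma~\ref{lemma:deg-invariance} (as already used in the unisolvence proof): the conforming variables $x_j$, $j\in\jump{\s}$, occur with degree at most $r$, the nonconforming variables $x_i$, $i\in\jump{\s^*}$, with degree at most $r+1$, and each form monomial contains at most one nonconforming variable of degree $r+1$.

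First I would expand each $v_\s$ in a tensor-product Legendre basis and separate it as $v_\s = v_{\s,0} + \sum_{i\in\jump{\s^*}} L_{r+1}(x_i)\,q_{\s,i} + g_\s$, where $v_{\s,0}$ collects all terms carrying a factor $L_r(x_j)$ with $j\in\jump{\s}$; the $q_{\s,i}$ collect the terms in which $x_i$ has degree $r+1$ (so $q_{\s,i}$ is independent of $x_i$, of degree $\le r-1$ in the conforming variables and $\le r$ in the remaining nonconforming ones, hence $q_{\s,i}\in\mc{Q}_{\s^*,r}\otimes\mc{Q}_{\s,r-1}$); and $g_\s\in\mc{Q}_{\s,r-1}\otimes\mc{Q}_{\s^*,r}$ collects the remainder, whose conforming variables have degree $\le r-1$ and all of whose variables have degree $\le r$. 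Writing $L_{r+1}(x_i)q_{\s,i} = (L_{r+1}(x_i)-L_{r-1}(x_i))q_{\s,i} + L_{r-1}(x_i)q_{\s,i}$ and setting $p_{\s,i}=q_{\s,i}$, the asserted representation holds \emph{if and only if} the low remainder $h_\s := g_\s + \sum_{i\in\jump{\s^*}} L_{r-1}(x_i)q_{\s,i}$ vanishes. Since $h_\s\in\mc{Q}_{\s,r-1}\otimes\mc{Q}_{\s^*,r}$, the form $h := \sum_\s h_\s\,\dd x_\s$ lies in $\mc{Q}_r^-\Lm^k(\hat{T})$.

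The core of the argument is then to prove $h=0$ by showing that $h$ has vanishing $\mc{Q}_r^-$ degrees of freedom and invoking their unisolvence \cite{Arnold-Boffi-Bonizzoni-2015}, recalled around \eqref{eq:Qr-minus-DOF}. Because $v$ already has vanishing $\mc{Q}_r^-$ DOFs, it suffices to check that the complementary part $v - h = \sum_\s \big[\sum_i (L_{r+1}-L_{r-1})(x_i)p_{\s,i} + v_{\s,0}\big]\dd x_\s$ has vanishing $\mc{Q}_r^-$ DOFs on every face. This is where the two defining features of the building blocks enter. For a term $(L_{r+1}-L_{r-1})(x_i)p_{\s,i}$: if the face fixes $x_i=\pm1$, the factor vanishes there since $L_{r+1}(\pm1)=L_{r-1}(\pm1)$ (a special case of \eqref{eq:legendre-identity}, as the Gauss--Lobatto nodes include the endpoints); if instead $x_i$ is a free coordinate, then $x_i$ is a conforming variable for the admissible test forms, so the factor is $L^2$-orthogonal to the allowed test polynomials of degree $\le r-2$ in $x_i$. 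For a term of $v_{\s,0}$: whenever $\tr_f(v_\s\,\dd x_\s)\neq 0$ one must have $\jump{\s}$ among the free coordinates, so the factor $L_r(x_j)$ survives and is orthogonal to the admissible test polynomials of degree $\le r-1$ in $x_j$. In every case the face integral against any $w\in\mc{Q}_{r-1}^-\Lm^{l-k}(f)$ vanishes by Legendre orthogonality.

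The main obstacle is precisely the bookkeeping in this last step: on a lower-dimensional face the splitting of coordinates into conforming and nonconforming (for both $v$ and the test form) changes, so one must verify that \emph{every} face DOF, not only the top-cell one, annihilates $v-h$. The two Legendre facts above, namely the vanishing of $L_{r+1}-L_{r-1}$ at $\pm1$ and its orthogonality to low-degree polynomials, are exactly what make all these cases collapse to zero. Once $h$ is seen to have vanishing DOFs on all faces, unisolvence gives $h=0$, and the claimed representation $v_\s = \sum_{i\in\jump{\s^*}}(L_{r+1}-L_{r-1})(x_i)p_{\s,i} + v_{\s,0}$ follows with $p_{\s,i}=q_{\s,i}$.
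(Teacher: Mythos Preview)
Your argument is correct and is genuinely different from the paper's. The paper proceeds by induction on the dimension of the subfaces: it first checks the asserted form on the $k$-dimensional faces, then assumes it on all $(n-1)$-dimensional faces and, for each $\s$, writes the coefficient $v_\s$ on $\hat T$ as the desired expression plus a remainder $q_\s$. Using the induction hypothesis it forces $q_\s$ to vanish on every face orthogonal to a nonconforming direction, so $q_\s = \tilde q_\s \prod_{i\in\jump{\s^*}}(1-x_i^2)$ with $\tilde q_\s \in \mc{Q}_{\s^*,r-2}\otimes\mc{Q}_{\s,r-1}$; the top-cell DOF $\int_{\hat T} v\wedge(\tilde q_\s\,\dd x_{\s^*})=0$ then kills $\tilde q_\s$.

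You instead decompose $v_\s$ in the tensor Legendre basis once and for all, isolate the candidate remainder $h=\sum_\s h_\s\,\dd x_\s\in\mc{Q}_r^-\Lm^k(\hat T)$, and observe that showing $h=0$ reduces to checking that $v-h$ annihilates every $\mc{Q}_r^-$ DOF on every face. Your two observations---that $L_{r+1}-L_{r-1}$ vanishes at $\pm1$ and is $L^2$-orthogonal to $\mc{P}_{r-2}$, and that on any face where $\tr_f(\dd x_\s)\neq0$ the conforming variable $x_j$ carrying $L_r(x_j)$ is free and paired with a test polynomial of degree $\le r-1$ in $x_j$---handle all face DOFs at once. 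This is cleaner: it avoids the inductive bookkeeping and makes transparent exactly why the combination $L_{r+1}-L_{r-1}$ (rather than $L_{r+1}$ alone) is the right object. The paper's route, on the other hand, is more hands-on and leans on the trace property (Theorem~\ref{thm:trace-property}) together with the explicit bubble factor $\prod_{i\in\jump{\s^*}}(1-x_i^2)$, which may be easier to adapt in settings where a convenient orthogonal basis is not available. One small point worth making explicit in your write-up: the claim that $q_{\s,i}$ has no $L_{r+1}(x_{i'})$ factor for $i'\neq i$ follows because the Legendre coefficient of $\prod_l L_{d_l}(x_l)$ in a polynomial depends only on monomials $x^\alpha$ with $\alpha_l\ge d_l$ for all $l$, and no form monomial in $\tilde{\mc{Q}}_r\Lm^k$ has two nonconforming variables of degree $r+1$.
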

\begin{proof}
Let $v = u - \hat{\Pi}_h u = \sum_{\s \in \Sigma_k(\hat{T})} v_\s \dd x_\s$ be given. 
To prove the assertion by induction we need to show the following two results.
First, for $f \in \lap_k(\hat{T})$ every polynomial coefficient of $\tr_f v \in \tilde{\mc{Q}}_r \Lm^k(f)$ satisfies \eqref{eq:u-piu-form}.
Second, if every polynomial coefficient of $\tr_f v$ satisfies \eqref{eq:u-piu-form} for all $f \in \lap_l(\hat{T})$ with given $l \ge k$, 
then $\tr_g v$ satisfies the same property corresponding to \eqref{eq:u-piu-form} for all $g \in \lap_{l+1}(\hat{T})$. 

We prove the first claim. Recalling $\tr_f v \in \tilde{\mc{Q}}_r \Lm^k (f) = \mc{Q}_r \Lm^k (f)$ for $f \in \lap_k(T)$ and the definition of $\hat{\Pi}_h$, the polynomial coefficient of $\tr_f v$ is a polynomial in $\mc{Q}_r (f)$ which is orthogonal to all $\mc{Q}_{r-1}(f)$
in the $L^2$ inner product. Then it has at least one factor of $L_r(x_j)$ for $j \in \jump{\s}$, therefore it is a form of \eqref{eq:u-piu-form}. 

For the second claim, from the trace property and the definition of $\hat{\Pi}_h$ commuting with the trace operator $\tr$, we can reduce the claim to the case $l = n-1$ without loss of generality. Suppose that every polynomial coefficient of $\tr_f v$ satisfies \eqref{eq:u-piu-form} for all $f \in \lap_{n-1}(\hat{T})$. 

By \eqref{eq:Qrminus-DOF} and the definition of shape function space of $\tilde{\mc{Q}}_r \Lm^k(\hat{T})$, $v_\s$ has a form
\algns{
\sum_{l \in \jump{\s^*}} {L_{r+1}(x_l) p_{\s, l} } + v_{\s, 0} + v_{\s, 1}
}
where $p_{\s,i} \in (\mc{Q}_{\s^*, r} \otimes \mc{Q}_{\s,r-1}) (\hat{T})$ but $p_{\s,i}$ is independent of $x_i$, $v_{\s,0}$ is as in the assertion, and $v_{\s,1}$ 
is a polynomial such that every term in its expression with the Legendre polynomials, has a factor of $L_r(x_l)$ or $L_{r-1}(x_l)$ for some $l \in \jump{\s^*}$. 
We remark that $v_{\s,0}$ may have polynomial terms of degree greater than $r$.
Let us rewrite $v_\s$ as 
\algns{
v_\s = \sum_{l \in \jump{\s^*}} \LRp{L_{r+1}(x_l) - L_{r-1}(x_l) } p_{\s, l}  + v_{\s, 0} + q_{\s} 
}
where $q_\s \in (\mc{Q}_{\s^*, r} \otimes \mc{Q}_{\s, r-1}) (\hat{T})$.
Note that $q_\s$ is a polynomial such that every term in its Legendre polynomial expression has $L_r(x_l)$ factor for some $l \in \jump{\s^*}$ or is in $\mc{Q}_{r-1}(\hat{T})$. 
Let $f$ be an $(n-1)$-dimensional hyperspace determined by $x_i = 1$ for some $i \in \jump{\s}$. Then 
the polynomial coefficient of the differential form $\dd x_\s$ in $\tr_f v$ is 
\algns{
\sum_{l \in \jump{\s^*}, l \not = i} \LRp{L_{r+1}(x_l) - L_{r-1}(x_l) } p_{\s, l}|_{x_i=1} + v_{\s,0}|_{x_i = 1} + q_{\s}|_{x_i = 1}. 
}
Since $\tr_f v$ can be written in the form of \eqref{eq:u-piu-form}, the comparison of the above and expressions in the form of \eqref{eq:u-piu-form} on $f$ leads to $q_{\s}|_{x_i = 1} = 0$. 
Since it holds for any $i \in \jump{\s^*}$ with $x_i = \pm 1$, 
\algns{
q_\s = \tilde{q}_{\s} \prod_{i \in \jump{\s^*}} (1-x_i^2), \qquad \tilde{q}_\s \in (\mc{Q}_{\s^*, r-2} \otimes \mc{Q}_{\s, r-1}) (\hat{T}). 
}
If we set $\eta = \tilde{q}_{\s} \dd x_{\s^*}$, then 
$\int_{\hat{T}} v \wedge \eta = 0$ due to the fact $v = u - \hat{\Pi}_h u$ and the definition of $\hat{\Pi}_h$. However, 
\algns{
\int_{\hat{T}} v \wedge \eta = \pm  \int_{\hat{T}} v_\s \tilde{q}_\s \vol_{\hat{T}} = \pm  \int_{\hat{T}} \prod_{i \in \jump{\s^*}} (1-x_i^2) \tilde{q}_\s^2 \vol_{\hat{T}}, 
}
therefore ${q}_\s = 0$. As a consequence $v_\s$ has a form \eqref{eq:u-piu-form}. 
\end{proof}

\begin{cor}
For $u \in \tilde{\mc{Q}}_r \Lm^k(\mc{T}_h)$ and $v \in \mc{Q}_{r-1}^d \Lm^k(\mc{T}_h)$, 
\algns{
\LRa{ u - \Pi_h u , v}_h = 0 .
}
\end{cor}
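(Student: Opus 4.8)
The plan is to recognize that this corollary is precisely assertion \eqref{eq:B-cond2} of Step~3 (with $k$ in place of $k-1$), so by the reduction already recorded it suffices to establish $\LRa{u - \hat{\Pi}_h u, v}_{h,\hat{T}} = 0$ on the reference element for $u \in \tilde{\mc{Q}}_r \Lm^k(\hat{T})$ and $v = \sum_{\s} v_\s \dd x_\s \in \mc{Q}_{r-1}\Lm^k(\hat{T})$; the statement on $\mc{T}_h$ then follows from \eqref{quadrature-all} and the usual scaling. By the definition \eqref{quadrature-c} of $\LRa{\cdot,\cdot}_{h,\hat{T}}$, I would reduce the problem to proving $E_r\LRp{(u - \hat{\Pi}_h u)_\s\, v_\s} = 0$ for each fixed $\s \in \Sigma_k(\hat{T})$. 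The input is Theorem~\ref{thm:intp-form}, which furnishes the decomposition $(u - \hat{\Pi}_h u)_\s = \sum_{i \in \jump{\s^*}} \LRp{L_{r+1}(x_i) - L_{r-1}(x_i)} p_{\s,i} + v_{\s,0}$, and I would bound the quadrature of the two summands (against $v_\s$) separately.

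For the first summand I would invoke the Legendre identity \eqref{eq:legendre-identity}: the factor $L_{r+1}(x_i) - L_{r-1}(x_i)$ vanishes at every Gauss--Lobatto node, and since the $x_i$-coordinate of every tensor-product node of $N_r$ equals some $\xi^j$, the whole integrand vanishes node by node and its quadrature is zero, with no exactness needed. For the contribution of $v_{\s,0}$, I would use that by Theorem~\ref{thm:intp-form} each Legendre tensor-product term of $v_{\s,0}$ carries a factor $L_r(x_j)$ for some conforming index $j \in \jump{\s}$. Writing such a term as $L_r(x_j)\,\psi$ with $\psi$ independent of $x_j$, and expanding the $x_j$-dependence of $v_\s$ as $\sum_{0 \le a \le r-1} L_a(x_j)\,\eta_a$, the fact that $E_r$ factorizes across the coordinate directions isolates the one-dimensional Gauss--Lobatto quadrature of $L_r(x_j)L_a(x_j)$; this product has degree $r+a \le 2r-1$, so the rule integrates it exactly and the value equals $\int_{-1}^1 L_r(t) L_a(t)\,\dd t = 0$ by orthogonality of the Legendre polynomials. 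Hence every term of $E_r\LRp{v_{\s,0}\,v_\s}$ vanishes, and summing the two contributions over $\s$ yields the claim.

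The delicate point to get right is the degree bookkeeping in the distinguished variable $x_j$ against the exactness threshold of the rule. The $(r+1)$-point Gauss--Lobatto rule is exact only up to degree $2r-1$, which is met with no slack; the argument works precisely because each term of $v_{\s,0}$ is a single Legendre tensor-product monomial whose $x_j$-factor is \emph{exactly} $L_r(x_j)$, so its degree in $x_j$ is $r$ even though (as the theorem warns) its total degree may exceed $r$, while $v \in \mc{Q}_{r-1}\Lm^k(\hat{T})$ contributes degree at most $r-1$ in $x_j$. This matching of $r$ against $r-1$ is exactly what places the one-dimensional integrand at the boundary degree $2r-1$ and lets orthogonality close the argument; had $v$ been permitted one additional degree, the reasoning would fail, which is why the space $W_h^k = \mc{Q}_{r-1}^d\Lm^k(\mc{T}_h)$ is the right companion to $\tilde{\mc{Q}}_r\Lm^k(\mc{T}_h)$.
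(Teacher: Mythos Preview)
Your proposal is correct and follows essentially the same approach as the paper: reduce to the reference element, invoke Theorem~\ref{thm:intp-form}, and then eliminate the two summands separately using \eqref{eq:legendre-identity} for the $L_{r+1}-L_{r-1}$ factor and Gauss--Lobatto exactness up to degree $2r-1$ together with Legendre orthogonality for the $v_{\s,0}$ terms. The paper's own proof is a one-sentence sketch listing exactly these four ingredients, and your write-up simply unpacks the details (in particular the degree count $r+(r-1)=2r-1$ in the conforming variable $x_j$) that the paper leaves implicit.
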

\begin{proof}
It suffices to show the assertion for  $u \in \tilde{\mc{Q}}_r \Lm^k(T)$ and $v \in \mc{Q}_{r-1}^d \Lm^k(T)$ for any $T \in \mc{T}_h$. We define $\hat{u} \in \tilde{\mc{Q}}_r \Lm^k(\hat{T})$ and $\hat{v} \in \mc{Q}_{r-1}^d \Lm^k(\hat{T})$ as the pullbacks $\phi^* u$ and $\phi^* v$
with the dilation map $\phi : \hat{T} \ra T$. From the definition of $\Pi_h$ and the property of pullback on differential forms, $\phi^* (u - \Pi_h u) = \hat{u} - \hat{\Pi}_h \hat{u}$ holds.
Since $\LRa{ u - \Pi_h u, v}_{h,T}$ is a constant multiple of $\LRa{ \hat{u} - \hat{\Pi}_h \hat{u}, \hat{v} }_{h, \hat{T}}$ with the constant depending on scaling.
By the characterization of $\hat{u} - \hat{\Pi}_h \hat{u}$ in Theorem~\ref{thm:intp-form}, the exactness of the Gauss-Lobatto quadrature rule for polynomials of degree $2r-1$, the orthogonality of Legendre polynomials, and \eqref{eq:legendre-identity}, $\LRa{ \hat{u} - \hat{\Pi}_h \hat{u}, \hat{v} }_{h, \hat{T}} = 0$. 
\end{proof}

Finally, we check that the proposed method give local coderivatives with an argument completely analogous to the one in \cite{Lee-Winther}.

To see more details, let the set of nodal degrees of freedom and the basis of $\tilde{\mc{Q}}_r \Lm^k(\mc{T}_h)$ associated to the nodal degrees of freedom be
\algns{
\{ \phi_{(T, z)} \,:\,  z \in N_r(T), T \in \mc{T}_h \} , \qquad 
\{ \psi_{(T, z)} \,:\,  z \in N_r(T), T \in \mc{T}_h \} ,
}
such that $\phi_{(T,z)} (\psi_{(T', z')}) = \delta_{T T'} \delta_{z z'}$ with the Kronecker delta. 
For each $z \in N_r(T)$ for some $T \in \mc{T}_h$ and we can consider the set of all basis functions $ \psi_{(T', z)}$ with the common point $z$. We denote this set of basis functions by $\Psi_z$. Then the quadrature bilinear form $\LRa{\cdot, \cdot}_h$ with $\{ \psi_{(T, z)} \}$
gives a block diagonal matrix such that each block is associated to $\Psi_z$ for some $z$. 
The influence of the inverse of this block matrix associated to $\Psi_z$ is only on the local domain consists of $\{T'\}$ such that $\psi_{(T', z)} \in \Psi_z$, so it is a spatially local operator and then gives a local coderivative.

\section{Concluding remarks} \label{conclusion}
In this paper we develop high order numerical methods with finite elements for the Hodge Laplace problems on cubical meshes that admit local approximations of the coderivatives. The main task of the development is construction of a new family of finite element differential forms on cubical meshes which satisfy the properties for the stability analysis framework presented in Section~2. 


In our study we have only considered Hodge-Laplace problems with the identity coefficients. However, 
the discussion can be easily extended to a matrix-valued coefficient $\bs{K}^{-1}$ which is symmetric positive definite, and piecewise constant on $\mc{T}_h$. We point out that this is an important difference between our approach and the work in \cite{MonkCohen1998}
for the Maxwell equations. More precisely, the quadrature rules in \cite{MonkCohen1998} are combinations of Gauss and Gauss-Lobatto rules for different components, so the methods are not robust for matrix-valued coefficients because the matrix-valued coefficients in bilinear forms mix the components with different quadrature rules. We refer the interested readers to \cite[Section~6]{Lee-Winther} for details of the analysis with $\bs{K}^{-1} \not = I$.

Finally, the extension of the methods to curvilinear meshes is not easy because the conditions in {\bf{(B)}} strongly rely on the properties of quadrature rules on cubical meshes which do not hold on distorted cubical (or called curvilinear) meshes.
In \cite{Lee-Yotov-etal}, this extension was done for weakly distorted quadrilateral and hexahedral meshes by measuring changes of bilinear forms and related quantities under mesh distortion. However, this is possible in the cases because explicit forms of the bilinear and trilinear maps and their derivatives are available for the specific setting $k = n-1$ and $n =2,3$. The extension to general $n$ and $k$ is completely open.

\providecommand{\bysame}{\leavevmode\hbox to3em{\hrulefill}\thinspace}
\providecommand{\MR}{\relax\ifhmode\unskip\space\fi MR }
\providecommand{\MRhref}[2]{%
  \href{http://www.ams.org/mathscinet-getitem?mr=#1}{#2}
}
\providecommand{\href}[2]{#2}

\bibliographystyle{amsplain}
\vspace{.125in}

\end{document}